%% file: ex_article.tex
\documentclass[onefignum,onetabnum]{siamart251216}

\input{ex_shared}

\newif\ifappendsupplement
\appendsupplementtrue

\ifappendsupplement
\else
\fi

\ifpdf
\hypersetup{
  pdftitle={Property-Preserving Data Compression for Incompressible Flows},
  pdfauthor={Andrey Pluzhnik, Ramansh Sharma, Martin Burtscher, and Varun Shankar}
}
\fi

\begin{document}

\maketitle

\begin{abstract}
We develop a method for lossy compression of incompressible (i.e., divergence-free) velocity
fields that preserves zero divergence and, on periodic domains,
periodicity. We fit the nodal velocities with property-preserving
matrix-valued kernels and compress their coefficients using standard
lossy compressors. Every decompressed coefficient vector defines a field
with the same constraints, independently of the compression tolerance. We present several fitting schemes designed around either low-rank representation of or sparsity in the interpolation matrix. More specifically, we show results for global Mat\'ern interpolation, a two-level \nyst{} approximation,
\nyst{}--Wendland residual interpolation, multilevel Wendland
interpolation and a property-preserving (redundant) kernel frame. We derive nodal error bounds
that relate the compressor tolerance to the additional velocity error. Experiments with four compressors on cavity,
Taylor--Green, and species-transport flows verify the bounds under their
coefficient-error assumptions and compare storage at matched velocity
accuracy. The low-rank methods give compact representations when they resolve
the velocity field sufficiently accurately; residual, multilevel, and frame interpolation
extend the attainable accuracy when they do not.
We also measure changes in turbulent velocity
statistics. The resulting representations support an accuracy--compression
tradeoff while preserving incompressibility and periodicity analytically.
\end{abstract}

\begin{keywords}
scientific data compression, incompressible flow, matrix-valued kernels,
property preservation, \nyst{} approximation, frame methods
\end{keywords}

\begin{MSCcodes}
68P30, 65F55, 76D05
\end{MSCcodes}

\input{introduction}
\input{methods}
\input{guarantees}
\input{results}

\input{conclusion}

\section*{Acknowledgments}
AP, RS, and VS were supported by the US National Science Foundation (NSF) grant SHF 2403379. RS and VS were also supported by NSF grant DMS 2505986. MB was supported by NSF grant SHF 2403380.

\section*{Declaration of AI assistance}
The authors used OpenAI's Codex to assist with manuscript editing, code development, and computational experiment workflows.
The authors assume responsibility for all content.

\bibliographystyle{siamplain}
\bibliography{references}

\ifappendsupplement
  \clearpage
  \begingroup
    \makeatletter
    \def\siamprelabel{SM}
    \makeatother
      \let\SMcite\cite
      \renewcommand{\cite}[1]{{\def\siamprelabel{}\SMcite{#1}}}

    \setcounter{page}{0}
    \setcounter{section}{0}
    \setcounter{subsection}{0}
    \setcounter{subsubsection}{0}
    \setcounter{equation}{0}
    \setcounter{figure}{0}
    \setcounter{table}{0}

    \renewcommand{\theHsection}{SM.\arabic{section}}
    \renewcommand{\theHsubsection}{SM.\arabic{section}.\arabic{subsection}}
    \renewcommand{\theHsubsubsection}
      {SM.\arabic{section}.\arabic{subsection}.\arabic{subsubsection}}
    \renewcommand{\theHfigure}{SM.\arabic{figure}}
    \renewcommand{\theHtable}{SM.\arabic{table}}
    \renewcommand{\theHequation}{SM.\arabic{equation}}

    \input{ex_supplement_body}
  \endgroup
\fi

\end{document}

\end{document}

%% file: ex_shared.tex
\usepackage{amsfonts}
\usepackage{amsmath,amssymb}
\usepackage{booktabs}
\usepackage{array}
\usepackage{needspace}
\usepackage{graphicx}
\usepackage{microtype}
\usepackage[nocompress]{cite}
\usepackage[section]{placeins}
\usepackage{float}

\ifpdf
  \DeclareGraphicsExtensions{.pdf,.png,.jpg,.eps}
\else
  \DeclareGraphicsExtensions{.eps}
\fi

\newcommand{\ddo}{\"{o}}
\newcommand{\nyst}{Nystr\ddo{}m}

\newsiamremark{remark}{Remark}
\newsiamremark{hypothesis}{Hypothesis}
\crefname{hypothesis}{Hypothesis}{Hypotheses}
\crefname{section}{Section}{Sections}
\crefname{subsection}{Section}{Sections}
\crefname{subsubsection}{Section}{Sections}
\Crefname{section}{Section}{Sections}
\Crefname{subsection}{Section}{Sections}
\Crefname{subsubsection}{Section}{Sections}
\newsiamthm{claim}{Claim}

\headers{A. Pluzhnik, R. Sharma, M. Burtscher, and V. Shankar}
{Property-Preserving Data Compression}

\title{Property-Preserving Data Compression for Incompressible Flows}

\author{Andrey Pluzhnik\thanks{Kahlert School of Computing, University of
Utah, Salt Lake City, UT 84112, USA.}
\and Ramansh Sharma\footnotemark[1]
\and Martin Burtscher\thanks{Department of Computer Science, Texas State
University, San Marcos, TX 78666, USA.}
\and Varun Shankar\footnotemark[1]\thanks{Corresponding author:
\email{shankar@cs.utah.edu}.}}

\usepackage{amsopn}


%% file: introduction.tex
\section{Introduction}
\label{sec:introduction}

High-fidelity flow simulations produce large collections of velocity
fields whose storage and transfer can be costly. Lossy compression
reduces these costs by permitting controlled errors in the stored values
\cite{Lindstrom2014,LiangEtAl2023,GongEtAl2024,FallinEtAl2025}.
For scientific use, the reconstructed fields must also retain the
properties needed for subsequent computation and analysis
\cite{DiEtAl2025Survey,CappelloEtAl2025Requirements}.
In incompressible flow,\footnote{Incompressibility refers to the fluid
constraint $\nabla\cdot u=0$. Data compression refers to reducing the
storage required for its numerical representation.}
these include zero divergence and, on periodic domains, agreement across
identified periodic boundaries. Small velocity errors alone do not ensure either
property: differentiation can amplify compression errors, and independently
compressed boundary values can disagree. Compression can also change
velocity fluctuations across spatial scales, affecting statistics used
to characterize turbulent flow.

Existing compressors preserve vector-field critical
points and separatrices~\cite{TheiselEtAl2003,XiaEtAl2024}, control selected
quantities of interest~\cite{JiaoEtAl2022,LeeEtAl2022}, or penalize divergence
and velocity-gradient errors during training
\cite{GlawsKingSprague2020,MomenifarEtAl2022}. A Helmholtz--Hodge
decomposition can extract a divergence-free field from perturbed velocity
data~\cite{FuselierWright2017}. Applied after decompression, this requires
an additional solve, and the resulting field error must account for the
projection. We instead fit the sampled velocities as a linear combination
of divergence-free functions and compress the weights in that combination.
Decompression changes the weights, but every resulting linear combination
remains divergence-free. The same fitted representation can therefore be
paired with different compressors without changing this guarantee.

Matrix-valued kernels constructed from radial basis functions (RBFs) offer a convenient starting point and can be used on regular grids, simplicial meshes, and point clouds. For each center, a kernel supplies functions describing how its
vector coefficient contributes to the velocity throughout the domain.
Differentiating a scalar positive-definite kernel yields functions with
zero divergence or zero curl~\cite{NarcowichWard1994,Lowitzsch2005,Wendland2005}. These kernels
have been used for Stokes flow~\cite{Wendland2009},
Helmholtz--Hodge decomposition~\cite{FuselierWright2017}, localized
approximation~\cite{DrakeFuselierWright2021}, magnetohydrodynamics, and ocean
flow reconstruction~\cite{McNally2011,VennellBeatson2009}, and for the surrogate modeling of incompressible flows~\cite{SharmaEtAl2026Fluids}. We use these kernels to preserve constraints when
lossy compression perturbs the coefficients of a fitted field.

For periodic flow, we construct functions that repeat after a domain-length
shift in either coordinate. We do this by mapping each periodic coordinate
onto a circle before evaluating the scalar kernel, then applying the
divergence-free construction~\cite{LightCheney1992,FuselierWright2012,SharmaEtAl2026Fluids}.
For flow with variation over many spatial scales, we combine kernels with
different length scales, following the turbulence-motivated construction
in~\cite{SharmaEtAl2026Fluids}. These mixtures still enforce zero divergence.
Their fitted coefficients determine the velocity statistics, which we
compare before and after compression.

We use three flows to examine these different requirements. Lid-driven
cavity flow tests incompressibility in the presence of localized wall
layers. Taylor--Green flow adds periodicity and has smooth spatial
structure that may admit a compact representation. The species-transport
velocity data contain variations over many spatial scales, allowing us
to examine both incompressibility and changes in velocity statistics.
Together, these examples test whether a small approximation space suffices
and what additional storage is needed when it does not.

Within these property-preserving spaces, we must choose a representation
that is both accurate and economical to store. Interpolation with globally
supported Mat\'ern kernels
provides a simple and accurate starting point: it fits every nodal velocity using one vector
coefficient per node. To reduce this coefficient count, we use a \nyst{}
approximation with matrix-valued kernels centered at a subset of nodes,
called landmarks~\cite{WilliamsSeeger2001}. To improve the accuracy of this
reduced fit, we develop a two-level \nyst{} procedure that fits a second set of kernel functions to the
residual (the difference between the data and the first fit). This can reduce storage,
but any error left outside the two spaces remains even with lossless
coefficient storage. To remove this remaining error, we also develop a \nyst{} + Wendland
Hybrid that interpolates the residual with compactly supported Wendland
functions~\cite{Wendland1995,Wendland2005} centered at every node, so
that the uncompressed representation interpolates the data. 

Multilevel Wendland interpolation offers another way to reduce the cost of kernel fitting:
successive kernel corrections approximate the remaining residual on nested
center sets~\cite{Wendland2010,FarrellGillowWendland2017}. We use a final
all-node level to recover the nodal data. Local supports can make these
fitting systems sparse. As an alternative to sequential residual fitting,
we also develop a property-preserving extension of the Frame Kernel
Method~\cite{FriedenEtAl2026}. It combines several sets of kernel functions
and fits their coefficients simultaneously. With more coefficients than
nodal conditions, the frame interpolates the data using a minimum-norm
coefficient vector.

Our five representations therefore compare a single global fit,
successive residual fits, and a simultaneous redundant fit. Their
compressed sizes must be measured at matched velocity accuracy; neither
coefficient count nor coefficient norm alone determines storage. We pair each representation with ZFP, Portable Floating-Point Lossy (PFPL),
SZ3, and MultiGrid Adaptive Reduction of Data (MGARD), and compare it
with direct nodal compression. We also provide a theorem that bounds the additional nodal error using the compressor's coefficient
tolerance and the evaluation matrix. Reported compression ratios count
storage for each velocity field; node coordinates, center indices, and
kernel parameters reused across fields are excluded.
We also use NVIDIA's CUDA platform for graphics processing units (GPUs) to compress and
decompress the coefficient arrays without transferring them to host
memory. These experiments compare central processing unit (CPU) and GPU compression at matched
field errors, accounting for their different compression modes.

For the Taylor--Green example, a compact periodic landmark space
gives substantially higher compression than direct nodal storage. For the
tested cavity and species-transport landmark sets, a full-node correction
is needed to attain small nodal errors within the reduced representation.
Among the interpolating kernel
representations, the best compression depends on the flow and error limit.
We present the kernel representations in \cref{sec:methods}, prove the nodal
error bounds in \cref{sec:guarantees}, and compare accuracy, compression,
and property changes in \cref{sec:results}. We present conclusions in \cref{sec:conclusions}. Experimental parameters appear
in \cref{sec:experiments} of the supplementary materials. The prefix SM
identifies supplementary sections, equations, figures, and tables.

%% file: methods.tex
\section{Property-preserving compression methods}
\label{sec:methods}

Let $X=\{x_i\}_{i=1}^N\subset\Omega\subset\mathbb R^d$, with $d=2$ or $3$,
be distinct sampling sites. We collect their velocities in
$u_X=(u(x_1)^T,\ldots,u(x_N)^T)^T\in\mathbb R^{dN}$.
We seek a representation whose reconstructed field remains divergence-free
after lossy compression. Taylor--Green additionally requires periodicity,
while the turbulent species-transport example motivates kernels with several physical
length scales. We first construct these kernels and describe coefficient
compression, then develop the five representations introduced in
\cref{sec:introduction}. Each defines a continuous velocity field that we
evaluate at the original nodes. Interpolants match the supplied nodal
values before compression; reduced fits may already have a nonzero error.

\subsection{Kernel representations and compression}
\label{sec:kernel-compression}

The kernel determines which field properties hold for every coefficient
vector. The fitting rule then determines the nodal accuracy and the
coefficients available for compression. We separate these two choices so
that all five representations can use the same property-preserving
construction.

\subsubsection{Property-preserving kernels}
\label{sec:property-kernels}

\paragraph{Divergence-free functions}
\label{sec:matrix-kernels}

We use globally supported Mat\'ern kernels~\cite{Wendland2005} for the global and landmark
fits, and compactly supported Wendland kernels~\cite{Wendland1995,Wendland2005} for residual corrections
and frames. Wendland kernels are piecewise polynomials that vanish beyond
a prescribed distance, thereby providing compact support and sparsity. Both scalar functions are $C^4 (\mathbb{R}^d)$, so the differentiations below
leave smooth matrix-valued kernels. For
the dimensionless distance $t\geq0$, they are
\begin{align}
 \phi_M(t) &= e^{-t}(t^2+3t+3),
 \label{eq:matern-c4}\\
 \phi_W(t) &= (1-t)_+^6(35t^2+18t+3).
 \label{eq:wendland-c4}
\end{align}
Here $(a)_+=\max(a,0)$, so $\phi_W(t)$ vanishes for $t\geq1$.
The Mat\'ern normalization differs from the usual one only by a positive
constant. The Wendland function is positive definite in dimensions up to
three~\cite{Wendland1995,Wendland2005}; the periodic construction below
requires a function valid in dimension four. A shape parameter
$\varepsilon>0$ sets the spatial length scale through
$\varphi_\varepsilon(x,y)=\phi(\varepsilon\|x-y\|_2)$.
For the Wendland kernel, the support radius is $1/\varepsilon$.

To obtain vector-valued basis functions with zero divergence, we
differentiate the scalar kernel in both arguments~\cite{NarcowichWard1994,Lowitzsch2005}. Let $\mathcal C_x$
denote the curl acting on the $x$ variable. The resulting matrix-valued
kernel is a $d\times d$ matrix for each pair of spatial points:
\begin{equation}
 \Phi_{\mathrm{div}}(x,y)
 =\mathcal C_x\mathcal C_y^T\varphi_\varepsilon(x,y)
 =\left(-\Delta_x I+\nabla_x\nabla_x^T\right)
   \varphi_\varepsilon(x,y),
 \label{eq:div-kernel}
\end{equation}
where the second equality holds for a stationary radial scalar kernel. In two
dimensions, $\mathcal C f=(\partial_{x_2}f,-\partial_{x_1}f)^T$; in three
dimensions it is the usual vector curl. Since divergence annihilates curl,
each column $\Phi_{\mathrm{div}}(\cdot,z)e_k$ is divergence-free, where
$e_k$ is the $k$th coordinate vector. Linear combinations of these columns
therefore give divergence-free fields:
\begin{equation}
 u_Z(x)=\sum_{j=1}^{M}\Phi_{\mathrm{div}}(x,z_j)c_j
 \quad\Longrightarrow\quad \nabla\cdot u_Z(x)=0
 \label{eq:kernel-expansion}
\end{equation}
for every center set $Z=\{z_j\}_{j=1}^M$ and all coefficients
$c_j\in\mathbb R^d$.
The kernel defines a reproducing kernel Hilbert space, also called its
native space. We denote it by $\mathcal H_\Phi$. The span of the functions
centered at $Z$ is a finite-dimensional subspace of $\mathcal H_\Phi$;
strict positive definiteness gives unique interpolation at distinct
sites~\cite{Lowitzsch2005,Wendland2009,FuselierWright2017}.

Write $\Phi$ for the chosen matrix-valued kernel, including the periodic
and multiscale variants constructed below. For evaluation nodes
$X=\{x_i\}_{i=1}^N$, define the block matrix
\begin{equation}
 K_\Phi(X,Z)=\bigl[\Phi(x_i,z_j)\bigr]_{i=1,j=1}^{N,M}
 \in\mathbb R^{dN\times dM}.
 \label{eq:block-kernel}
\end{equation}
For a kernel expansion $s$ with coefficients $c=(c_1^T,\ldots,c_M^T)^T$,
evaluation at the sampling nodes gives
\[
 s_X=K_\Phi(X,Z)c,
 \qquad
 s_X=(s(x_1)^T,\ldots,s(x_N)^T)^T.
\]
When $Z=X$,
interpolation with a strictly positive-definite kernel gives a square positive
definite block system. With a globally supported Mat\'ern kernel it is dense;
with a Wendland kernel entries vanish outside the support radius, although
broad supports can produce a nearly dense matrix. When $M<N$, the
evaluation matrix has $dN$ rows and $dM$ columns. Fitting the nodal values
then gives an overdetermined least-squares problem, or equivalently a
projection onto the sampled trial space. Combining several sets of kernel
functions can instead give more coefficients than nodal values. The
resulting interpolation problem is underdetermined and requires a rule
for selecting one coefficient vector. The representations below use
these three possibilities: square interpolation systems, overdetermined
projection problems, and underdetermined frame fits.

\paragraph{Periodic kernels}
\label{sec:periodic-multiscale-kernels}

The Taylor--Green domain identifies coordinates differing by $2\pi$ in
either direction. We map each coordinate onto a circle using its sine and
cosine. Their product is a two-dimensional torus in $\mathbb R^4$; points
separated by a period have the same image. The resulting embedding and
distance are
\begin{equation}
 h(x_1,x_2)=(\cos x_1,\sin x_1,\cos x_2,\sin x_2),
 \qquad
 r_\pi(x,y)=\|h(x)-h(y)\|_2.
 \label{eq:torus-embedding}
\end{equation}
The distance satisfies
$r_\pi^2=4-2\{\cos(x_1-y_1)+\cos(x_2-y_2)\}$ and is unchanged by a
periodic shift of either argument. The scalar kernel must be positive
definite in the embedding dimension. The Mat\'ern function in
\cref{eq:matern-c4} has this property. For Wendland, we use the $C^4$
function $\phi_{5,2}$, which is positive definite in dimensions up to
five~\cite{Wendland1995,Wendland2005}:
\begin{equation}
 \phi_{W,\pi}(t)=(1-t)_+^7(16t^2+7t+1).
 \label{eq:periodic-wendland-profile}
\end{equation}
Restricting either scalar kernel to the embedded torus preserves positive
definiteness~\cite{LightCheney1992,FuselierWright2012}. We then impose
zero divergence by differentiating with respect to the original spatial
coordinates, rather than the four embedding coordinates:
\begin{equation}
 \Phi_\pi(x,y)=\mathcal C_x\mathcal C_y^T
 \phi\!\left(\varepsilon r_\pi(x,y)\right).
 \label{eq:periodic-div-kernel}
\end{equation}
Every column of $\Phi_\pi$ is divergence-free and periodic throughout the domain
~\cite{SharmaEtAl2026Fluids}.
At distinct sites modulo the periods, the periodic kernel yields a symmetric
positive-definite interpolation matrix. It is the Gram matrix of first-derivative
evaluation functionals in the scalar kernel's native space. This space contains
smooth local functions whose derivatives can be prescribed independently at
distinct sites, so those functionals are linearly independent
\cite{Wendland2005,FuselierWright2012}. We identify duplicate periodic nodes
before forming the matrix and call the remaining nodes the active sites.

\paragraph{Physical-scale mixtures}

For species transport, we seek a kernel that represents velocity
variation across several physical scales. Second-order turbulence scaling
relates the mean squared velocity difference between two points to their
separation, with a $2/3$ power law in an inertial range. To incorporate this
scale dependence into the kernel, we combine terms with lengths $\ell_s$,
using the same construction for Mat\'ern and Wendland. Writing
$\eta=(\ell_1,\ldots,\ell_J)$ for the chosen lengths, we define
\begin{equation}
 \Phi_\eta(x,y)
 =\sum_{s=1}^{J}\ell_s^{8/3}
   \Phi_{\mathrm{div}}(x,y;\ell_s^{-1}),
 \qquad
 0<\ell_1<\cdots<\ell_J.
 \label{eq:power-law-kernel}
\end{equation}
The two derivatives in \eqref{eq:div-kernel} contribute a factor
$\ell_s^{-2}$. Multiplication by $\ell_s^{8/3}$ therefore gives each
matrix-kernel term an amplitude proportional to $\ell_s^{2/3}$, motivated
by second-order turbulence scaling~\cite{Kolmogorov1991,SharmaEtAl2026Fluids}.
Positive weights preserve positive definiteness, and every term is divergence-free.
As a covariance, the kernel describes correlations between velocities
at different points. The weights in \eqref{eq:power-law-kernel} incorporate
the desired scale dependence into those correlations. The statistics of
a fitted field also depend on its coefficients, so we measure their change
after compression in \cref{sec:property-results}.

The physical scales in the mixture and the approximation levels used below
serve different purposes. The mixture specifies the kernel covariance;
the levels provide successive or redundant sets of centers for fitting
that kernel to the data. We keep the lengths and weights fixed across
landmark counts and correction levels, with the complete mixture at each
level.
The Wendland mixture has maximum support $\ell_J$. We use sparse
factorizations for local supports and dense component-block factorizations
for broad supports.

\subsubsection{Fitting, compression, and evaluation}
\label{sec:overview}

Once the kernel functions satisfy the required constraints, compression
can act on their coefficients. To separate the error introduced by fitting
from the error introduced by compression, let
$\mathcal X$ be a space of sufficiently smooth vector fields, let
$u\in\mathcal X$, and let
$R_X:\mathcal X\to\mathbb R^{dN}$ restrict it to the active nodes. We write
compression as
\begin{equation}
 R_Xu\xrightarrow{\ E\ }c\xrightarrow{\ Q\ }\widetilde c
 \xrightarrow{\ D\ }\widehat u.
 \label{eq:compression-factorization}
\end{equation}
Here, the fitting map $E:\mathbb R^{dN}\to\mathbb R^m$ computes coordinates in a
chosen reconstruction space. The map $Q:\mathbb R^m\to\mathbb R^m$ denotes
lossy compression followed by decompression, and the evaluation map
$D:\mathbb R^m\to\mathcal X$ reconstructs a field. For these kernel methods, $D$ is defined on the continuous domain.
We measure storage using the compressed stream produced within $Q$.
Some methods rescale the fitted coefficients before compression and undo
that scaling during evaluation. We include these operations in $E$ and
$D$, respectively. In \eqref{eq:compression-factorization}, $c$ denotes
the numbers sent to $Q$; the constructions below specify their scaling.

Let $L:\mathcal X\to\mathcal Y$ collect the homogeneous linear constraints to
be preserved, with $\mathcal Y$ the space of constraint values. Here $L$
returns the divergence, the difference between values on identified
periodic boundaries, or both. Every reconstructed field satisfies the
constraints if every function in the range of $D$ satisfies them:
\begin{equation}
 \operatorname{range}(D)\subseteq\ker L,
 \qquad\text{equivalently}\qquad LD=0.
 \label{eq:admissible-decoder}
\end{equation}
Replacing $c$ by $\widetilde c$ adds compression error to the fitting error
already present in $Dc$. A reduced space may therefore have a nonzero
approximation error even when its coefficients are stored exactly.

\paragraph{Direct nodal storage}
\label{sec:direct-nodes}

Direct nodal compression is the reference against which we measure the
storage benefit or cost of a kernel representation. We compress the
entries of $u_X\in\mathbb R^{dN}$ directly, with identity
fitting and evaluation maps and zero approximation error before compression.
This representation defines nodal values only. We also consider a
periodicity-preserving variant, called periodic-node storage: it removes
duplicate endpoint values before compression and copies their reconstructed
values to the paired endpoints afterward. This enforces equality at those
pairs without imposing a divergence constraint.

\subsection{Coefficient representations}

Global interpolation gives an exact nodal fit with one vector coefficient
per node. We first reduce this coefficient count through \nyst{}
approximation. We then use residual interpolation to recover accuracy
when a reduced fit is insufficient. Finally, we fit several kernel spaces
simultaneously with a redundant frame. These choices change both the
fitting problem and the coefficients presented to the compressor.

\subsubsection{Global interpolation and reduced \nyst{} approximation}
\label{sec:global-reduced}

\paragraph{Global Mat\'ern interpolation}
\label{sec:global-matern}

We begin with global interpolation, which replaces the nodal velocities
by an equal number of kernel coefficients. This gives an exact fit and
allows us to assess how the change of representation affects compression.
We seek a kernel expansion $s$ centered at $X$ with $s(x_i)=u(x_i)$.
For a strictly positive-definite matrix kernel this interpolant
is unique and has the smallest native-space norm among functions satisfying the nodal
conditions~\cite{Lowitzsch2005,Wendland2005}. Expanding $s$ in kernel
translates gives
\begin{equation}
 Kc=u_X,
 \qquad
 (Dc)(x)=\sum_{j=1}^{N}\Phi(x,x_j)c_j.
 \label{eq:global-system}
\end{equation}
Here $K=K_\Phi(X,X)\in\mathbb R^{dN\times dN}$ is a square,
dense, symmetric positive-definite (SPD) matrix that couples the velocity components.
The interpolant uses $dN$ coefficients; the computed nodal fit matches the
data to the linear-solve tolerance before compression.
We solve the coupled system by block Cholesky with Schur complements,
followed by iterative refinement. Each block has size $N\times N$.
To set the scale of coefficient errors during
evaluation, we multiply the coefficient vector by a power-iteration estimate
of the largest eigenvalue of $K$ and divide by that estimate when evaluating
the field. The error estimates in \cref{sec:guarantees} use a separate
upper bound on $\|K\|_2$. Kernel parameters and solver stopping criteria
are given in \cref{sec:representation-parameters,sec:implementation}.

\paragraph{Two-level Mat\'ern \nyst{} approximation}
\label{sec:two-level-nystrom}

The global fit requires a dense system with $dN$ unknowns. To reduce this
cost and the number of stored coefficients, we select a subset of sampling
nodes as kernel centers, called landmarks, and fit the nodal velocities in
their span~\cite{WilliamsSeeger2001}. The choice of centers
affects the approximation error; leverage-score and geometric selection
strategies have been studied in
\cite{MuscoMusco2017,OglicGaertner2017}. We use geometric selection for
the first level and residual information to place a second level where
the first fit is least accurate. Both levels remain reduced spaces, so
the construction does not generally interpolate the data.
Let $Z_1\subset X$ contain $M_1$ landmarks. We form their interpolation
matrix $W_1$, its Cholesky factor $L_1$, and the matrix $C_1$ that evaluates
their kernel functions at all sampling nodes:
\begin{equation}
 W_1=K_\Phi(Z_1,Z_1)=L_1L_1^T,
 \qquad
 C_1=K_\Phi(X,Z_1),
 \qquad
 F_1=C_1L_1^{-T}.
 \label{eq:nystrom-features}
\end{equation}
The matrix $W_1\in\mathbb R^{dM_1\times dM_1}$, called the landmark Gramian, is dense and
SPD. Multiplication by $L_1^{-T}$ normalizes the kernel functions in the
native-space inner product. Their sampled values form $F_1$, which has
$dN$ rows and $dM_1$ columns. When $M_1<N$, we fit these columns to more
nodal values than there are coefficients. To obtain an orthonormal basis
for the fit and discard numerically dependent directions, we use a thin
QR factorization followed by a singular value decomposition (SVD)
\cite{GolubVanLoan2013}. Writing the factorizations as $F_1=Q_1R_1$ and
$R_1=P_1\Sigma_1V_1^T$, $Q_1$ has orthonormal columns and $R_1$ is
upper triangular; $P_1$ and $V_1$ contain the left and right singular
vectors of $R_1$. We retain singular values satisfying
\begin{equation}
 \sigma_i>\sigma_1\,\epsilon_{64}\max(dN,dM_1),
 \label{eq:nystrom-rank-threshold}
\end{equation}
where $\epsilon_{64}$ is double-precision machine epsilon. For $r$ retained singular
values, let $P_{1,r}$ and $V_{1,r}$ denote the associated singular
vectors, and $\Sigma_{1,r}$ the actual singular values. The matrix
$U_1=Q_1P_{1,r}$ has $r_1=r$ orthonormal columns at the sampling nodes.
Applying the same change of basis to the kernel functions gives
\begin{equation}
 \Psi_1(x)=K_\Phi(x,Z_1)L_1^{-T}V_{1,r}\Sigma_{1,r}^{-1},
 \qquad R_X\Psi_1=U_1.
 \label{eq:nystrom-continuous-basis-one}
\end{equation}
Because $R_X\Psi_1=U_1$, the coefficients minimizing the nodal
least-squares error are obtained by orthogonal projection. The fitted
coefficients and the remaining residual are
\begin{equation}
 c_1=U_1^Tu_X,
 \qquad r_{\rm first}=u_X-U_1c_1.
 \label{eq:nystrom-level-one}
\end{equation}

The first landmark set covers the domain by farthest-point sampling.
We select a disjoint second set $Z_2$ of $M_2$ landmarks using residuals
from separate training velocity fields, favoring large errors while spreading the
centers spatially. Both sets are fixed before testing; the selection rule
is given in \cref{sec:representation-parameters}.
For $Z_2$, we form $W_2=L_2L_2^T$, $C_2$, and $F_2=C_2L_2^{-T}$
as in \eqref{eq:nystrom-features}. Before fitting the residual,
we remove the components of its columns already represented by $U_1$.
Orthonormalizing the remaining columns gives a second space orthogonal
to the first:
\begin{equation}
 \widetilde F_2=(I-U_1U_1^T)F_2,
 \qquad
 U_2=\operatorname{orth}(\widetilde F_2),
 \qquad
 c_2=U_2^Tr_{\rm first}.
 \label{eq:nystrom-level-two}
\end{equation}
Here $\operatorname{orth}$ uses the same QR--SVD procedure and truncation
rule as at the first level, leaving $r_2$ orthonormal columns.
Let $T_2$ be the resulting change-of-basis
matrix, so that $U_2=\widetilde F_2T_2$. To evaluate the second correction
as a continuous field, we apply the corresponding projection and change
of basis to the kernel functions:
\begin{equation}
 \begin{aligned}
 \Psi_2(x)&=\left[G_2(x)-\Psi_1(x)U_1^TG_2(X)\right]T_2,\\
 G_2(x)&=K_\Phi(x,Z_2)L_2^{-T},
 & R_X\Psi_2&=U_2.
 \end{aligned}
 \label{eq:nystrom-continuous-basis-two}
\end{equation}
Both $\Psi_1$ and $\Psi_2$ are linear combinations of divergence-free
kernel columns, with periodic columns used for Taylor--Green. The coefficient-fitting and field-evaluation maps are
therefore
\begin{equation}
 E(u_X)=\begin{bmatrix}U_1^Tu_X\\U_2^T(I-U_1U_1^T)u_X\end{bmatrix},
 \qquad
 D(c_1,c_2)(x)=\Psi_1(x)c_1+\Psi_2(x)c_2.
 \label{eq:two-level-nystrom}
\end{equation}
In exact arithmetic, $U_1^TU_2=0$ and the nodal evaluation matrix has operator
norm one. The component outside $\operatorname{range}[U_1\ U_2]$
remains as approximation error. Its Euclidean norm is
\begin{equation}
 \left\|(I-U_1U_1^T-U_2U_2^T)u_X\right\|_2.
 \label{eq:nystrom-projection-error}
\end{equation}
This error is present before compression and cannot be reduced by storing
the same coefficients more accurately. Kernel parameters and landmark
counts are given in \cref{sec:representation-parameters}; the retained
rank at each level is determined by \eqref{eq:nystrom-rank-threshold}.

\subsubsection{Multilevel residual interpolation}
\label{sec:residual-interpolation}
The reduced spaces above do not strictly interpolate. However, this is easily remedied by interpolating the residual
left by a reduced or coarse fit. We consider two constructions: a
Mat\'ern \nyst{} fit followed by a Wendland correction, and a sequence of
Wendland fits on nested center sets. Both end with a correction at all
active nodes.

\paragraph{\nyst{} + Wendland Hybrid}
\label{sec:hybrid}

The hybrid uses a compact Mat\'ern fit to reduce the residual that must be
interpolated. We then represent that residual with property-preserving
Wendland functions, so both terms retain the required constraints after
compression. Using the first \nyst{}
basis $U_M$ with $r_M$ retained columns, we compute
\begin{equation}
 c_M=U_M^Tu_X,
 \qquad r_{\rm base}=u_X-U_Mc_M.
 \label{eq:hybrid-base}
\end{equation}
Here $r_{\rm base}$ is the residual after projection onto the Mat\'ern landmark space.
Let $K_W=K_{\Phi_W}(X,X)$ correspond to the dataset-specific Wendland kernel, with
support radius $\rho=\beta h_X$ for cavity and Taylor--Green. Here
$h_X$ is the median nearest-neighbor node spacing and $\beta>0$.
Species transport instead uses the fixed physical-scale mixture.
The residual is interpolated by solving the system
\begin{equation}
 K_Wa=r_{\rm base},
 \qquad
 u_X^{\rm fit}=U_Mc_M+K_Wa.
 \label{eq:hybrid-residual}
\end{equation}
The base fit uses the orthogonal projection in \eqref{eq:nystrom-level-one} while the
correction is obtained from a square interpolation problem. Because the Wendland kernel is
positive definite, $K_W$ is SPD and sparse~\cite{Wendland2005,Wendland2009}.
We therefore rely on sparse direct factorization when the supports are local. For species transport, this matrix is nearly dense and thus uses coupled block Cholesky.
Because $K_Wa=r_{\rm base}$, the uncompressed fit satisfies
$u_X^{\rm fit}=U_Mc_M+r_{\rm base}=u_X$ up to linear-solve tolerance, thereby removing the \nyst{} projection error.
Of course, subsequent lossy compression perturbs both coefficient blocks.
To control how coefficient errors affect nodal values, we scale the
Wendland coefficients using an upper bound on the evaluation-matrix norm.
For a matrix $B$, $\|B\|_1$ and $\|B\|_\infty$ are the largest absolute
column and row sums, respectively. We set
\begin{equation}
 s_W=\sqrt{\|K_W\|_1\|K_W\|_\infty},\qquad
 a^{\rm sc}=s_Wa.
 \label{eq:hybrid-scaling}
\end{equation}
We evaluate the scaled coefficients with $K_W/s_W$, whose Euclidean
operator norm is at most one. A perturbation of the scaled coefficient
block therefore produces no larger Euclidean perturbation in the nodal
values. This supplies the constant needed to allocate compression
tolerances in \cref{sec:guarantees}. The value of $\beta$
and the landmark counts are given in \cref{sec:representation-parameters}.

\paragraph{Multilevel Wendland interpolation}
\label{sec:multilevel-wendland}
One can also build the entire fit from Wendland kernels. On nested center
sets, each interpolation system fits the residual from the preceding
levels. Local supports allow sparse systems, offering an alternative to
the dense reduced fits above. Multilevel approximation with compactly supported kernels
has been studied in
\cite{Opfer2006Multiscale,Wendland2010,GeorgoulisEtAl2013,
FarrellWendland2013,GriebelRiegerZwicknagl2015,LeGiaSloanWendland2017},
including divergence-free interpolation~\cite{FarrellGillowWendland2017}.
Multilevel expansions have also been used for compression by coefficient
thresholding on the sphere~\cite{LeGiaWendland2014}. Here we retain the
coefficient arrays and apply the same lossy compressors used for the other
representations.

We fit three corrections in sequence, each to the residual left by the
preceding levels. The final center set contains every node, ensuring
interpolation before compression. Let
$Z_1\subset Z_2\subset Z_3=X$ be nested center sets selected by farthest-point
sampling. For cavity and Taylor--Green, coarse centers have broad supports; finer centers use shorter
supports to correct more localized residual structure. At level $\ell$, a continuous correction
$w_\ell$, formed from Wendland kernels centered at $Z_\ell$, interpolates the current residual on
$Z_\ell$; the correction is then evaluated on all of $X$ before the next
level. For cavity and Taylor--Green, let $h_\ell$ be the median
nearest-neighbor center spacing and set $\rho_\ell=\beta h_\ell$.
At each level, $A_\ell$ evaluates the correction at its own centers,
where interpolation conditions are imposed, and $D_\ell$ evaluates it
at all sampling nodes:
\begin{equation}
 A_\ell=K_{\Phi_W^{(\ell)}}(Z_\ell,Z_\ell),
 \qquad
 D_\ell=K_{\Phi_W^{(\ell)}}(X,Z_\ell).
 \label{eq:wendland-level-matrices}
\end{equation}
Starting from $r_0=u_X$, we solve for $a_\ell$ and update the residual at all nodes:
\begin{equation}
 A_\ell a_\ell=R_\ell r_{\ell-1},
 \qquad
 r_\ell=r_{\ell-1}-D_\ell a_\ell,
 \qquad \ell=1,2,3,
 \label{eq:wendland-multilevel}
\end{equation}
where $R_\ell$ restricts the residual to $Z_\ell$. As in the hybrid approach, each
stored block uses
$s_\ell=\sqrt{\|D_\ell\|_1\|D_\ell\|_\infty}$,
$a_\ell^{\rm sc}=s_\ell a_\ell$, and is evaluated as
$D_\ell(a_\ell^{\rm sc}/s_\ell)$. The uncompressed nodal fit is
\begin{equation}
 u_X^{\rm fit}=\sum_{\ell=1}^3
 D_\ell\frac{a_\ell^{\rm sc}}{s_\ell}.
 \label{eq:wendland-decoder}
\end{equation}
Each $A_\ell$ is a square SPD collocation matrix. For cavity and Taylor--Green,
the decreasing radii keep the matrices sparse as the center set is refined,
and the systems use sparse direct factorization. For species transport we
use the same physical mixture at all three levels and solve the systems
with dense block Cholesky. In both cases,
their factorizations are reused across velocity fields on the same geometry. Because $Z_3=X$, the last level interpolates the remaining nodal
residual, giving an exact uncompressed nodal reconstruction to solver
tolerance. Center counts, $\beta$, and solver tolerances
are specified in \cref{sec:representation-parameters,sec:implementation}.

\subsubsection{Redundant kernel frames}
\label{sec:kernel-frame}

Sequential residual fitting fixes each coefficient block before computing
the next. To allow all blocks to contribute to the interpolation conditions
simultaneously, we use the multiscale kernel frames introduced in~\cite{FriedenEtAl2026} with
the property-preserving kernels of \cref{sec:property-kernels}. The
combined representation has more coefficients than nodal conditions and
therefore admits many interpolating coefficient vectors. Each level contributes a block of coefficients, with one vector coefficient
per center. The frame combines kernel functions from three nested center sets,
giving more coefficients than nodal values.
Following~\cite{FriedenEtAl2026}, let $Z_0=X\supset Z_1\supset Z_2$. If $h_X$ is the
finest-node spacing, the cavity and Taylor--Green support radii are
\begin{equation}
 \rho_\ell=\beta\,2^\ell h_X,
 \qquad \ell=0,1,2.
 \label{eq:frame-radii}
\end{equation}
Let $D_\ell=K_{\Phi_W^{(\ell)}}(X,Z_\ell)$. The matrix of their sampled
values is
\begin{equation}
 A=\begin{bmatrix}D_0&D_1&D_2\end{bmatrix}
 \in\mathbb R^{dN\times m},
 \qquad
 m=d\sum_{\ell=0}^2|Z_\ell|>dN.
 \label{eq:frame-matrix}
\end{equation}
The reconstructed field sums the three kernel expansions. Exact nodal reconstruction requires solving
$dN$ equations for $m>dN$ unknown coefficients
\cite{Opfer2006TightFrame,FriedenEtAl2026}.
The finest block contains every active node, so its positive-definite
collocation matrix makes $A$ full row rank.

We choose the unique minimum-norm coefficient vector for this unscaled matrix:
\begin{equation}
 c^\star=\arg\min_{Ac=u_X}\|c\|_2,
 \qquad
 c^\star=A^T(AA^T)^{-1}u_X.
 \label{eq:frame-coefficients}
\end{equation}
For local supports, we compute these coefficients by enforcing both the
interpolation constraint and the stationarity condition for minimizing
$\tfrac12\|c\|_2^2$. With Lagrange multiplier $\lambda$, these conditions give
\begin{equation}
 \begin{bmatrix}I&A^T\\A&0\end{bmatrix}
 \begin{bmatrix}c^\star\\\lambda\end{bmatrix}
 =\begin{bmatrix}0\\u_X\end{bmatrix}.
 \label{eq:frame-kkt}
\end{equation}
The matrix is square,
sparse, symmetric, and indefinite. The identity block selects the Euclidean
minimum-norm solution from $Ac=u_X$. Rescaling the columns before fitting
would change this minimizer.
For species transport the same kernel is used at every level, so
$A=KS$, where $S$ concatenates the componentwise center-selection matrices.
Each column of $S$ selects a column of $K$, corresponding to one center
and velocity component. Thus $H=SS^T$ is diagonal, and each diagonal entry
counts how many times that kernel column appears across the levels. All entries are
positive because the finest level contains every node. Substituting
$A=KS$ into \eqref{eq:frame-coefficients} gives
\begin{equation}
 c^\star=S^TH^{-1}K^{-1}u_X.
\label{eq:frame-shared-kernel}
\end{equation}
We evaluate this formula with one coupled SPD block solve and diagonal
weighting. The coarse blocks repeat columns of the fine block, and the
minimum-norm fit divides each interpolation coefficient among its copies.
\begin{table}[!htpb]
\caption{Coefficient-fitting systems and uncompressed coefficient counts per
velocity field. Here $n=dN$, $r_j$ is the retained rank of landmark level $j$,
$r_M$ is the retained rank of the hybrid's Mat\'ern fit,
and SPD means symmetric positive definite.
Factorizations are reused on fixed geometry; coefficient counts do not
determine compressed byte counts.}
\label{tab:method-cost}
\centering
\small
\begin{tabular}{@{}>{\raggedright\arraybackslash}p{0.25\linewidth}>{\raggedright\arraybackslash}p{0.45\linewidth}l@{}}
\toprule
Method & Fitting system & Coefficients \\
\midrule
Direct nodes & None & $n$ \\
Global Mat\'ern & Dense SPD interpolation & $n$ \\
Two-level \nyst{} & Landmark SPD systems; QR--SVD projection & $r_1+r_2$ \\
\nyst{} + Wendland & Projection; SPD correction & $r_M+n$ \\
Multilevel Wendland & SPD residual interpolation; sparse for local supports & $\sum_\ell d|Z_\ell|$ \\
Kernel frame & Underdetermined minimum-norm fit & $\sum_\ell d|Z_\ell|$ \\
\bottomrule
\end{tabular}
\end{table}
All physical scales enter through the mixture defining $K$.
Solver details and residual tolerances are given in
\cref{sec:implementation}. After this fit, each block is scaled by
$s_\ell=\sqrt{\|D_\ell\|_1\|D_\ell\|_\infty}$; the stored coordinates are
$c_\ell^{\rm sc}=s_\ell c_\ell^\star$, and evaluation uses $D_\ell/s_\ell$.
The three coefficient blocks are compressed separately for cavity and
Taylor--Green. For species transport they are concatenated in level and component order
and compressed as one array; \cref{sec:guarantees-ablations} compares these storage choices.
The center counts, radii, and physical lengths are listed in
\cref{sec:representation-parameters}.

\Cref{tab:method-cost} summarizes the fitting systems and coefficient counts.
We rebuild the fitting and evaluation matrices from the geometry, center
or landmark indices, kernel parameters, and coefficient-block sizes.
These quantities are shared across velocity fields on a fixed mesh.
We therefore do not store a dense evaluation matrix with each compressed
field. Compression ratios count the bytes stored for each velocity field;
\cref{sec:experiments} specifies the shared setup.

%% file: guarantees.tex
\section{Preservation and nodal error estimates}
\label{sec:guarantees}

Compression perturbs the coefficients computed in \cref{sec:methods}.
We now show how those perturbations affect the reconstructed field.
The linear constraints remain satisfied because the basis functions are
unchanged. The nodal values do change, and their error depends on both
the coefficient error and the evaluation matrix. We bound this change
for all five representations, then use orthogonality to obtain a sharper
estimate for two-level \nyst{} approximation.

\subsection{Preservation under coefficient perturbations}

\begin{lemma}[preservation by the evaluation map]
\label{thm:preservation}
Let $D:\mathbb R^m\to\mathcal X$ and $L:\mathcal X\to\mathcal Y$ be linear,
with $LD=0$. For any coefficient map $Q:\mathbb R^m\to\mathbb R^m$,
\begin{equation}
 LDQ(c)=0\qquad(c\in\mathbb R^m).
 \label{eq:nullspace}
\end{equation}
\end{lemma}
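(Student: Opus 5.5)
The plan is to apply the hypothesis $LD=0$ pointwise to the vector $Q(c)$. Although $Q$ may be nonlinear, discontinuous, or data dependent (quantization, thresholding, entropy coding followed by decoding), it is still a map into $\mathbb R^m$. So for each fixed $c\in\mathbb R^m$, the decompressed vector $\widetilde c=Q(c)$ is just some element of $\mathbb R^m$. No property of $Q$ is needed beyond the fact that its values lie in the domain of $D$.

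First, I would observe that $LD=0$ is equivalent to $\operatorname{range}(D)\subseteq\ker L$, as recorded in \cref{eq:admissible-decoder}. That is, $L(Dv)=0$ for every $v\in\mathbb R^m$.

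Second, I would fix $c$ and set $v=Q(c)$. Then $LDQ(c)=(LD)(v)=0$, which is \cref{eq:nullspace}. Only linearity of $L$ and $D$ is used to make the composition $LD$ a well-defined linear map equal to zero. Linearity of $Q$ is not required, since $Q$ is composed on the inside.

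To connect the statement with \cref{sec:methods}, I would then note that for every representation the evaluation map $D$ is a finite linear combination of columns of $\Phi_{\mathrm{div}}$ or $\Phi_\pi$. This includes the orthogonalized bases $\Psi_1,\Psi_2$ in \cref{eq:nystrom-continuous-basis-two} and the scaled Wendland blocks. Each such column is divergence-free by \cref{eq:kernel-expansion}, and in the periodic case it is also periodic, so $LD=0$ holds for $L$ equal to the divergence, the periodic-difference operator, or both.

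I do not expect a real obstacle. The only point requiring care is ensuring that $D$ maps into a space on which $L$ is defined, meaning sufficiently smooth fields. This holds because the scalar profiles are $C^4$, so after two derivatives the kernel columns are $C^2$ and the divergence is classical.
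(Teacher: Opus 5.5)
Your proof is correct and matches the paper's one-line argument: $Q(c)$ is just some vector in $\mathbb R^m$, so $DQ(c)\in\operatorname{range}(D)\subseteq\ker L$, and no property of $Q$ is needed. Your added remarks on why $LD=0$ holds for the kernel representations (divergence-free and periodic columns, enough smoothness from the $C^4$ profiles) go beyond what the lemma requires, but they are consistent with \cref{sec:methods}.
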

\begin{proof}
Every reconstructed field $DQ(c)$ belongs to
$\operatorname{range}(D)\subseteq\ker L$.
\end{proof}

For the kernels in \cref{sec:methods}, take $L$ to be divergence or the
difference between field values on paired periodic boundaries. The lemma
applies regardless of the compressor tolerance. To control velocity
accuracy as well, we need a bound on the perturbation of the coefficients.

\subsection{Error induced in the nodal values}

In this section, let $A=R_XD=[A_1,\ldots,A_J]$ evaluate the stored coefficient blocks at
the nodes. Each block includes the inverse of any coefficient scaling
applied before compression. Write $c$ for the uncompressed coefficients
and $\widetilde c=c+\delta c$ for their decompressed values. Their
nodal reconstructions differ by $A\delta c$. We use $\|\cdot\|_2$ for
the Euclidean vector norm and its induced matrix norm, so that
$\|A\delta c\|_2\leq\|A\|_2\|\delta c\|_2$.

\begin{theorem}[nodal coefficient-to-field bound]
\label{thm:codec-bound}
Suppose coefficient block $j$ has $m_j$ entries and
$b_j\geq\|A_j\|_2$. With
$E_{\rm rep}=\|u_X-Ac\|_2$, the total nodal error satisfies
\begin{align}
 u_X-A\widetilde c&=(u_X-Ac)-A\delta c,
 \label{eq:nodal-error-identity}\\
 \|u_X-A\widetilde c\|_2
 &\leq E_{\rm rep}+\sum_{j=1}^J b_j\|\delta c_j\|_2.
 \label{eq:nodal-block-bound}
\end{align}
If each block satisfies a maximum-absolute or root-mean-square (RMS)
coefficient-error bound,
\begin{equation}
 \|\delta c_j\|_\infty\leq\varepsilon_j
 \quad\hbox{or}\quad
 m_j^{-1/2}\|\delta c_j\|_2\leq\varepsilon_j,
 \label{eq:codec-native-contracts}
\end{equation}
then
\begin{equation}
 \|u_X-A\widetilde c\|_2
 \leq E_{\rm rep}+\sum_{j=1}^J b_j\sqrt{m_j}\,\varepsilon_j.
 \label{eq:codec-coefficient-bound}
\end{equation}
In particular, choosing coefficient tolerances satisfying
\begin{equation}
 \sum_{j=1}^J b_j\sqrt{m_j}\,\varepsilon_j\leq\tau
 \label{eq:codec-budget-allocation}
\end{equation}
limits the additional nodal error to $\tau$.
\end{theorem}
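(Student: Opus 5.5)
The plan is to prove the identity first, then the block bound via the triangle inequality, and finally convert each per-block coefficient contract into a Euclidean bound.

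\textbf{Identity.} Substitute $\widetilde c=c+\delta c$ and use linearity of $A=R_XD$. This gives $A\widetilde c=Ac+A\delta c$, so $u_X-A\widetilde c=(u_X-Ac)-A\delta c$, which is \eqref{eq:nodal-error-identity}.

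\textbf{Block bound.} Take Euclidean norms in the identity and apply the triangle inequality to get $\|u_X-A\widetilde c\|_2\le E_{\rm rep}+\|A\delta c\|_2$. Then split by blocks: $A\delta c=\sum_{j}A_j\delta c_j$. A second triangle inequality, together with the induced-norm property $\|A_j\delta c_j\|_2\le\|A_j\|_2\|\delta c_j\|_2\le b_j\|\delta c_j\|_2$, yields \eqref{eq:nodal-block-bound}.

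I deliberately bound block by block rather than using $\|A\|_2\|\delta c\|_2$. This is what lets the budget allocate separate tolerances $\varepsilon_j$ to separately compressed blocks, and it needs no orthogonality between blocks.

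\textbf{Coefficient contracts.} It remains to show $\|\delta c_j\|_2\le\sqrt{m_j}\,\varepsilon_j$ under either contract in \eqref{eq:codec-native-contracts}.
\begin{itemize}
\item In the RMS case this is immediate: multiply $m_j^{-1/2}\|\delta c_j\|_2\le\varepsilon_j$ by $\sqrt{m_j}$.
\item In the maximum-absolute case use the standard norm equivalence $\|v\|_2^2=\sum_i v_i^2\le m_j\|v\|_\infty^2$ for $v\in\mathbb R^{m_j}$. This gives $\|\delta c_j\|_2\le\sqrt{m_j}\|\delta c_j\|_\infty\le\sqrt{m_j}\,\varepsilon_j$.
\end{itemize}
Inserting either bound into \eqref{eq:nodal-block-bound} gives \eqref{eq:codec-coefficient-bound}.

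\textbf{Budget.} The final claim follows by reading \eqref{eq:codec-coefficient-bound} as $E_{\rm rep}$ plus an additional term. That term is bounded by $\sum_j b_j\sqrt{m_j}\,\varepsilon_j$, which is at most $\tau$ under \eqref{eq:codec-budget-allocation}. Equivalently, $\|A\delta c\|_2\le\tau$, so the deviation from the uncompressed reconstruction is at most $\tau$.

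There is no genuine obstacle. The only point needing care is that the contracts are stated per block with block sizes $m_j$, so the $\sqrt{m_j}$ factors must be applied blockwise before summing rather than globally. I would also remark that any valid upper bound $b_j$ suffices, for example the scaled-block bound $\|D_\ell/s_\ell\|_2\le1$ from $\|B\|_2\le\sqrt{\|B\|_1\|B\|_\infty}$, and that Lemma~\ref{thm:preservation} is not needed here.
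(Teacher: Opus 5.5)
Your proposal is correct and follows the same route as the paper's proof: add and subtract $Ac$, apply the triangle inequality blockwise with $\|A_j\delta c_j\|_2\le b_j\|\delta c_j\|_2$, and use the fact that either contract in \eqref{eq:codec-native-contracts} implies $\|\delta c_j\|_2\le\sqrt{m_j}\,\varepsilon_j$. Your remark that the budget statement amounts to $\|A\delta c\|_2\le\tau$ matches how the paper reads the additional nodal error.
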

\begin{proof}
Add and subtract $Ac$, apply the triangle inequality to
$\sum_j A_j\delta c_j$, and use
$\|A_j\delta c_j\|_2\leq b_j\|\delta c_j\|_2$.
Either assumption in \cref{eq:codec-native-contracts} implies
$\|\delta c_j\|_2\leq\sqrt{m_j}\varepsilon_j$.
\end{proof}

In \eqref{eq:codec-coefficient-bound}, $E_{\rm rep}$ is the error already
present in the fit, and $b_j$ bounds the amplification of coefficient
errors during evaluation. Once these quantities are known, the requested
coefficient tolerances bound the total nodal error. Here $\tau$ is an
absolute Euclidean allowance for the compression contribution. Dividing
the bound by $\|u_X\|_2$ gives a relative error bound; any fitting error
must still be included when prescribing a total relative error.
The two assumptions in \eqref{eq:codec-native-contracts} accommodate
maximum-absolute and RMS error controls without requiring a particular
compression algorithm. The estimate applies when the compressor meets
the stated coefficient tolerance. The modes used in our experiments are
specified in \cref{sec:experiments}.

\subsection{Bounds for the coefficient representations}
\label{sec:representation-bounds}

The evaluation matrices in \cref{sec:methods} determine the constants in
\cref{thm:codec-bound}. Orthonormal columns preserve the Euclidean size
of a coefficient perturbation. For the interpolation and frame matrices,
upper bounds on their norms give sufficient coefficient tolerances.

\paragraph{Orthonormal landmark spaces}
For two-level \nyst{} in \cref{sec:two-level-nystrom}, set $U=[U_1\ U_2]$,
$c=U^Tu_X$, and $e_{\rm rep}=(I-UU^T)u_X$. The projection residual is
orthogonal to the columns of $U$, whereas the compression perturbation
$U\delta c$ lies in their span. In exact arithmetic, $U^TU=I$, so
the Pythagorean identity gives
\begin{equation}
 \|u_X-U(c+\delta c)\|_2^2
 =\|e_{\rm rep}\|_2^2+\|\delta c\|_2^2
 \leq E_{\rm rep}^2+\sum_jm_j\varepsilon_j^2.
 \label{eq:nystrom-nodal-bound}
\end{equation}
Storing the two coefficient blocks as one vector with tolerance
$\varepsilon=\tau/\sqrt{m_1+m_2}$ gives
\begin{equation}
 \|u_X-U(c+\delta c)\|_2
 \leq\sqrt{E_{\rm rep}^2+\tau^2}.
 \label{eq:nystrom-budget-bound}
\end{equation}
Increasing the landmark count can reduce $E_{\rm rep}$; tightening the
compressor tolerance reduces the bound on the compression error. In floating-point arithmetic,
these identities hold up to fitting, orthogonality, and evaluation errors.

\paragraph{Scaled interpolation blocks}
The interpolation and frame matrices need not have orthonormal columns.
We control their norms by scaling. For an unscaled block $B_j$, the normalization
$q_j=\sqrt{\|B_j\|_1\|B_j\|_\infty}$ gives
$A_j=B_j/q_j$ with $\|A_j\|_2\leq1$. This is the scaling used for the
Wendland and frame blocks in \cref{sec:methods}. For $J$ such blocks,
\begin{equation}
 \|[A_1\ \cdots\ A_J]\|_2\leq\sqrt J.
\label{eq:concatenated-block-bound}
\end{equation}
For block vectors $v=(v_1^T,\ldots,v_J^T)^T$, the triangle inequality and
Cauchy--Schwarz give
\[
 \left\|\sum_j A_jv_j\right\|_2\leq\sum_j\|v_j\|_2
 \leq\sqrt{J}\left(\sum_j\|v_j\|_2^2\right)^{1/2}
 =\sqrt{J}\|v\|_2.
\]
Thus a common coefficient tolerance
$\varepsilon=\tau/\sqrt{Jm}$, where $m=\sum_jm_j$, bounds the added error
by $\tau$, whether the blocks are compressed together or separately.
One can instead allocate fractions
$w_j\geq0$, $\sum_jw_j\leq1$, and set
$\varepsilon_j=w_j\tau/\sqrt{m_j}$ as in
\cref{eq:codec-budget-allocation}. The experimental allocations are
specified in \cref{sec:coefficient-tolerances}.

\paragraph{Global interpolation}
We scale the global coefficients by a power-iteration estimate
$\widehat\lambda$ and evaluate them with $A=K/\widehat\lambda$.
Power iteration alone does not prove that $\|A\|_2\leq1$.
An upper bound based on absolute row and column sums supplies the required
constant for the same stored coefficients:
\begin{equation}
 b_{\rm global}=
 \frac{\sqrt{\|K\|_1\|K\|_\infty}}{\widehat\lambda}
 \ \geq\ \|K/\widehat\lambda\|_2.
 \label{eq:global-valid-bound}
\end{equation}
Choosing $\varepsilon=\tau/(b_{\rm global}\sqrt{dN})$ therefore bounds
the additional error by $\tau$. The experiments use
$\varepsilon=\tau/\sqrt{dN}$ after power-iteration scaling, giving an
allowance of $b_{\rm global}\tau$. We evaluate this constant for cavity and
Taylor--Green in \cref{sec:bound-details}; no corresponding constant is reported for
the species physical-scale mixture.

All bounds concern the stored nodes. In numerical interpolation,
$E_{\rm rep}$ includes the solve residual. Floating-point evaluation
introduces an additional roundoff error.

%% file: results.tex
\section{Results}
\label{sec:results}

We now ask how much storage each representation saves at a prescribed
velocity accuracy, and which field properties survive compression.
The three flows introduced in \cref{sec:introduction} test reduced fitting
and interpolation on smooth, localized, and multiscale velocity fields.
All methods use the same five velocity fields for each flow.
We first verify the nodal
error bounds, then compare compression at matched velocity accuracy and
the properties of the reconstructed fields. Dataset details and numerical
settings appear in \cref{sec:experiments}.

\subsection{Quantities reported}
\label{sec:results-quantities}

\paragraph{Velocity error and storage}
Let $u_X^{(s)},\widehat u_X^{(s)}\in\mathbb R^{dN}$ denote the original and
reconstructed nodal velocities for field $s=1,\ldots,S$, with $S=5$. If
$B_{\rm raw}$ counts bytes in the original velocity array and
$B_{\rm comp}^{(s)}$ counts bytes in its compressed representation, the
compression ratio and relative discrete field error are
\begin{equation}
 R^{(s)}=\frac{B_{\rm raw}}{B_{\rm comp}^{(s)}},\qquad
 e_2^{(s)}=\frac{\|\widehat u_X^{(s)}-u_X^{(s)}\|_2}
 {\|u_X^{(s)}\|_2}.
 \label{eq:results-rate-error}
\end{equation}
The rate--distortion figures plot the mean compression ratio against the
largest relative error over those fields:
\begin{equation}
 \overline R=\frac1S\sum_{s=1}^S R^{(s)},\qquad
 E_2=\max_{1\leq s\leq S}e_2^{(s)}.
 \label{eq:aggregate-rate-error}
\end{equation}
The maximum pointwise-error curves in
\cref{sec:codec-appendix} instead use
\begin{equation}
 E_\infty=\max_s\max_i
 \|\widehat u^{(s)}(x_i)-u^{(s)}(x_i)\|_2.
\label{eq:maximum-pointwise-error}
\end{equation}
Here $E_\infty$ is absolute, in velocity units, whereas $E_2$ is relative.
For each method and landmark count, the main curves select the compressor
and setting with the largest $\overline R$ satisfying
$E_2\leq\varepsilon_{\rm field}$ at each velocity-error limit
$\varepsilon_{\rm field}$. Each setting applies to all five fields;
the selected compressor can change along a curve.
\Cref{sec:codec-appendix} gives comparisons with each compressor fixed.

For Taylor--Green, direct compression stores one velocity per distinct
periodic node. The periodicity experiment also compresses duplicate
boundary values independently to test the loss of endpoint equality.

\paragraph{Incompressibility and periodicity}
We measure the divergence introduced by direct nodal compression relative
to the uncompressed data, using a common local polyharmonic-spline (PHS)
differentiation operator $\mathcal D_h$:
\begin{equation}
 \delta_{\rm div}^{(s)}=\frac{L_X}{U_{\rm rms}^{(s)}}
 \left(\frac{1}{|\mathcal I|}\sum_{i\in\mathcal I}
 |\mathcal D_h(\widehat u^{(s)}-u^{(s)})(x_i)|^2\right)^{1/2}.
 \label{eq:divergence-measure}
\end{equation}
Here $\mathcal I$ contains the interior evaluation nodes, $L_X$ is the largest
coordinate range, and $U_{\rm rms}^{(s)}=\|u_X^{(s)}\|_2/\sqrt N$.
The PHS operator is specified in \cref{sec:experiments}. Applying it to the
velocity difference removes the discrete divergence of the original data.
Kernel reconstructions instead satisfy $\nabla\cdot\widehat u=0$ analytically.
Their plotted zeros express this guarantee; the direct-point values
estimate divergence change using nodal differentiation.
For Taylor--Green, let $(x_p^+,x_p^-)$, $p=1,\ldots,P$, denote pairs
of boundary nodes representing the same periodic location. Periodicity
requires equal velocities at each pair. We measure their RMS and maximum
mismatches by
\begin{equation}
 \begin{aligned}
 \delta_{\pi,2}^{(s)}
 &=\frac{1}{U_{\rm rms}^{(s)}}
 \left(\frac1P\sum_{p=1}^P
 \|\widehat u^{(s)}(x_p^+)-\widehat u^{(s)}(x_p^-)\|_2^2\right)^{1/2},\\
 \delta_{\pi,\infty}^{(s)}
 &=\frac{\max_p\|\widehat u^{(s)}(x_p^+)-\widehat u^{(s)}(x_p^-)\|_2}
 {\max_i\|u^{(s)}(x_i)\|_2}.
 \end{aligned}
 \label{eq:periodicity-measures}
\end{equation}

\paragraph{Velocity fluctuations across scales}
For species transport, we measure changes in velocity fluctuations across
scales using the second-order structure function. For node pairs $I_k$
whose separation lies in the bin centered at $r_k$, it is
\begin{equation}
 S_2^{(s)}(r_k)=\frac{1}{|I_k|}\sum_{(i,j)\in I_k}
 \|u^{(s)}(x_i)-u^{(s)}(x_j)\|_2^2.
 \label{eq:structure-function}
\end{equation}
With $\boldsymbol S_2=(S_2(r_k))_k$ and its reconstructed counterpart
$\widehat{\boldsymbol S}_2$, the relative change is
\begin{equation}
 \frac{\|\widehat{\boldsymbol S}_2-\boldsymbol S_2\|_2}
      {\|\boldsymbol S_2\|_2}.
\label{eq:structure-function-error}
\end{equation}
Suppressing the field index, we use bins covering
$[0.0041304,0.0179020]$ in source coordinate units and fit
\begin{equation}
 S_2(r)=Cr^p
 \label{eq:structure-exponent}
\end{equation}
over these bins, with amplitude $C$ and exponent $p$. We compare $p$
with the uncompressed value and the Kolmogorov prediction
$2/3$~\cite{Kolmogorov1991}. The source simulation uses a turbulence
closure, so this comparison tests sampled velocity scaling without
establishing a resolved inertial cascade.

The property plots average these measures over the five test velocity fields,
using only settings that meet the velocity-error limit on every field.

\subsection{Verification of the nodal coefficient-error bound}
\label{sec:bound-verification}

We choose coefficient tolerances from \cref{thm:codec-bound} for a nodal
error allowance $\tau$. For the normalized landmark, Wendland, hybrid,
and frame representations,
\begin{equation}
 \theta=\frac{\|A\delta c\|_2}{\tau}
 \label{eq:bound-effectivity}
\end{equation}
measures the fraction of that allowance used: values near one indicate
a sharp estimate, while small values indicate a conservative one.
For frame fits, we separate the uncompressed residual
$\rho=\|Ac-u_X\|_2$ from the total error $e=\|A\widetilde c-u_X\|_2$ using
\begin{equation}
 \max\{0,e-\rho\}\leq\|A\delta c\|_2\leq e+\rho.
\label{eq:compression-error-bracket}
\end{equation}
For species transport, the compression-only error is measured
directly by subtracting the uncompressed fitted field from the decompressed field.

\begin{table}[!htbp]
\centering
\caption{Observed nodal compression error divided by its predicted upper
bound $\tau$, as defined in \eqref{eq:bound-effectivity}. The predicted bound
corresponds to 1; values below 1 satisfy it. Statistics pool the species-transport
CPU compressors, landmark counts, and tolerances, excluding cases that
exceed the requested coefficient error.}
\label{tab:decoder-bound-verification}
\begin{tabular}{lrr}
\toprule
Method & \shortstack{Median\\observed/bound} & \shortstack{Maximum\\observed/bound}\\
\midrule
Two-level \nyst{} & 0.159 & 0.635\\
\nyst{} + Wendland Hybrid & 0.013 & 0.120\\
Multilevel Wendland & 0.0027 & 0.0072\\
Frame Kernel Method & 0.0028 & 0.0073\\
\bottomrule
\end{tabular}
\end{table}

Across the tested flows and landmark counts, the nodal compression error
remained below the applicable estimate whenever the compressor met its
coefficient bound. The estimate was closest for two-level \nyst{}
(\cref{tab:decoder-bound-verification}), whose orthonormal evaluation
preserves the norm of the coefficient perturbation. The other bounds use
upper estimates of the evaluation matrix norm and therefore need not be
attained by the actual coefficient perturbations. Observed errors can
also be smaller because compressors return coefficient errors below
their prescribed tolerances.

For two-level \nyst{}, the total error also contains the projection error
$E_{\rm rep}$. Orthogonality gives the estimate
$\sqrt{E_{\rm rep}^2+\tau^2}$ in \cref{eq:nystrom-budget-bound}.
The hybrid's full-node Wendland correction removes this approximation error
up to the interpolation solve accuracy.

Independent coefficient perturbations also verify the
\nyst{} total-error identity \eqref{eq:nystrom-nodal-bound} to
floating-point accuracy. \Cref{sec:bound-details} gives the norm calculations,
flow-specific values, and excluded cases.

\input{results_rate_distortion}
\input{results_property_preservation}

%% file: results_rate_distortion.tex
\subsection{Field accuracy and compression rate}
\label{sec:results-rate-distortion}

We compare storage at matched velocity accuracy across the three flows.
Reduced \nyst{} fits use fewer coefficients but may leave a fitting error;
interpolating methods remove this limitation. The plots span several
velocity-error limits. We use $10^{-3}$ as a common accuracy for the
comparisons highlighted here and in \cref{sec:property-results}; it does
not restrict the kernel guarantees, which hold at every compression tolerance.

\begin{figure}[!htbp]
 \centering \includegraphics[page=1,width=0.8\linewidth]{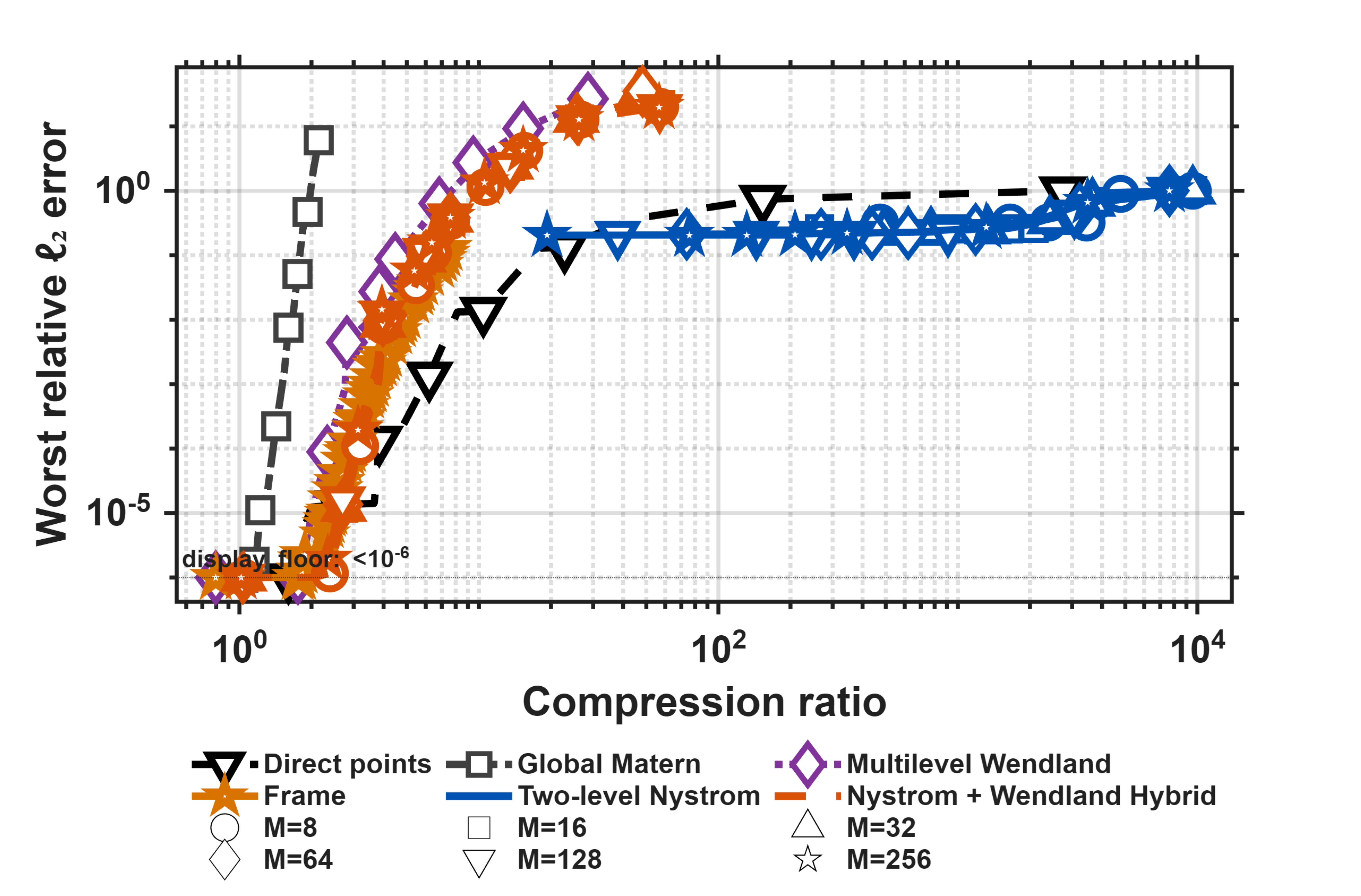}
 \caption{Lid-driven cavity. All tested landmark counts for two-level \nyst{}
 and the \nyst{} + Wendland Hybrid are shown with direct points, global Mat\'ern,
 multilevel Wendland, and the Frame Kernel Method. Each curve is the
 Pareto envelope across four compressors; each point uses the mean compression
 ratio and largest error over five velocity fields, defined in
 \eqref{eq:aggregate-rate-error}. Errors below
 $10^{-6}$ are placed on the labeled display floor.}
 \label{fig:rate-distortion-lid}
\end{figure}

\paragraph{Comparison across compressors}
The curves combine ZFP, PFPL, SZ3, and MGARD using the selection rule in
\cref{sec:results-quantities}, after checking each compressor's prescribed
error criterion on all five velocity fields. We retain a setting only if no other
setting gives at least as much compression with no larger error and
improves one of the two quantities. These retained settings form the
Pareto envelope; successive points may use different compressors.
Most reported $10^{-3}$ comparisons use PFPL; the two-level Taylor--Green
\nyst{} point and several species points use ZFP. The best compressor varies
with the coefficient distribution. Separate compressor curves and
pointwise-error comparisons appear in \cref{sec:codec-appendix}.

\paragraph{Landmark counts}

Increasing the landmark count gives two-level \nyst{} a larger fitting
space at the cost of more coefficients. The hybrid already interpolates
with its all-node correction, so changing its base count instead changes
how the data are divided between the two coefficient blocks. We evaluate both methods at
the landmark counts listed in \cref{sec:representation-parameters}. For two-level
\nyst{}, each level uses up to $M$ landmarks, with disjoint sets for the two levels.
For the hybrid, only the Mat\'ern
base landmark count changes; the Wendland correction continues to use every active node.
The curve parameter $M$ denotes the requested landmark count, not the retained
matrix rank $r_j\leq dM_j$. The figures compare all tested landmark counts with the
other representations at the same velocity accuracy.
The hybrid interpolates the nodal values to solver tolerance before
compression. Its plotted error comes from lossy coefficient compression
and finite-precision fitting and evaluation.

\paragraph{Lid-driven cavity}
\Cref{fig:rate-distortion-lid} shows that reducing the coefficient count
through the tested landmark spaces leaves a substantial cavity fitting error.
Even the largest tested space has a relative error above $0.2$, far
above the $10^{-3}$ error limit. The plateau is already present in the
uncompressed fit: tightening the coefficient tolerance cannot recover
velocity components outside the retained space. The full Wendland correction
removes this approximation error. Once it is added, changing the base landmark count has
little effect on compressed size at the $10^{-3}$ error limit.

At this $10^{-3}$ error limit, the frame gives the largest compression ratio
among the tested divergence-free methods, followed by the hybrid. Both
remain below direct nodal compression. Despite its larger coefficient
count, the frame requires the least compressed storage among the kernel
representations at this accuracy.

\paragraph{Taylor--Green flow}
In \cref{fig:rate-distortion-taylor}, two-level periodic \nyst{} resolves all five Taylor--Green velocity fields below
relative error $10^{-3}$. At this error limit, $M=32$ gives the largest
tested two-level compression ratio, $2.92\times10^3$. Increasing the landmark count lowers the
fitting error but also enlarges the coefficient vector. The hybrid
is less sensitive to landmark count and requires more storage.
At this error limit, the two-level reduced space is already sufficient;
the hybrid's exact uncompressed interpolation does not compensate for
the additional stored correction.

\begin{figure}[!htbp]
 \centering \includegraphics[page=2,width=0.80\linewidth]{plot_bank_corrected_20260929.pdf}
 \caption{Periodic Taylor--Green flow. Rate--distortion curves for all tested
 landmark counts and comparison methods. Each curve is the Pareto envelope
 across four compressors, using the mean ratio $\overline R$ and maximum
 relative error $E_2$ over five fields from \eqref{eq:aggregate-rate-error}.
 Direct compression stores velocities
 at 7,288 distinct periodic nodes. Errors below $10^{-6}$ are placed on the labeled
 display floor.}
 \label{fig:rate-distortion-taylor}
\end{figure}

\paragraph{Species transport}
\Cref{fig:rate-distortion-species} shows a substantial approximation error for the tested \nyst{} spaces on the
species velocity fields: none meet a maximum relative error of $10^{-1}$.
The full Wendland correction removes this approximation error, but global Mat\'ern
already gives the best kernel compression at $10^{-3}$: $3.01\times$,
compared with $4.53\times$ for direct compression. The other interpolating
representations require slightly more storage. Here a hierarchy or a
redundant frame does not improve on the single global interpolant.
The hybrid becomes marginally more economical at $10^{-2}$, so even among
interpolants the preferred representation can depend on the error limit.
All kernel reconstructions remain divergence-free at every compressor
setting; these comparisons concern storage at matched velocity accuracy.

\begin{figure}[!htbp]
 \centering \includegraphics[page=3,width=0.80\linewidth]{plot_bank_corrected_20260929.pdf}
 \caption{Three-dimensional species transport. The two-level \nyst{} family
 has a fitting error before compression; the full Wendland correction
 removes it to solver accuracy. Each curve is the Pareto envelope across
 four compressors, using the mean ratio and maximum relative error over
 five fields from \eqref{eq:aggregate-rate-error}. All tested landmark counts are shown.
 Errors below $10^{-6}$ are placed on
 the labeled display floor.}
 \label{fig:rate-distortion-species}
\end{figure}

%% file: results_property_preservation.tex
\subsection{Property preservation after compression}
\label{sec:property-results}
\label{sec:results-property-preservation}

For each method and landmark count, we select the most compressed setting
meeting each velocity-error limit on all five fields
(\cref{sec:results-quantities}), then examine its property errors.
Kernel fields remain divergence-free and, on periodic domains,
periodic at every compression tolerance. Changes in species
velocity statistics are measured at the selected settings.

The zero band in the divergence plots denotes analytic divergence freedom.
For direct compression, we estimate the divergence change using
\eqref{eq:divergence-measure}; the periodicity plots compare paired boundary
values. Property errors can increase when the selected compressor changes,
since selection maximizes storage savings subject to velocity accuracy.

We show each landmark count that meets at least one velocity-error limit.
Smaller spaces may exceed these limits even before compression.

\subsubsection{Incompressibility}

\paragraph{Lid-driven cavity}
Direct compression achieves a higher ratio at relative velocity error
$10^{-3}$, but introduces a nonzero divergence change. The frame gives the
highest compression among the tested divergence-free methods
(\cref{fig:divergence-lid}). The storage advantage of direct compression
therefore comes without the incompressibility guarantee.

\begin{figure}[tbp]
 \centering
 \includegraphics[page=4,width=0.8\linewidth]{plot_bank_corrected_20260929.pdf}
 \caption{Lid-driven cavity. For each method and landmark count, each point
 uses the compressor setting giving the highest compression ratio while
 satisfying the relative-$\ell_2$ error limit on all five velocity fields.
 The zero band for kernel fields denotes analytic divergence freedom, not a
 measured PHS error. Direct points use the normalized local-PHS estimate of
 $\mathcal D_h(\widehat u-u)$ in \eqref{eq:divergence-measure}. Both coordinates
 are averaged over the five fields, with compression ratio defined in
 \eqref{eq:aggregate-rate-error}. We show each hybrid landmark count that
 meets an error limit; no tested two-level \nyst{} approximation meets these limits.}
 \label{fig:divergence-lid}
\end{figure}

\paragraph{Taylor--Green flow}
For Taylor--Green, two-level \nyst{} and the hybrid both exceed the
compression ratio of direct nodal storage at relative velocity error
$10^{-3}$ while retaining zero analytic divergence. Direct compression
introduces a nonzero divergence change at this error limit
(\cref{fig:divergence-taylor}).

\begin{figure}[tbp]
 \centering \includegraphics[page=5,width=0.8\linewidth]{plot_bank_corrected_20260929.pdf}
 \caption{Taylor--Green divergence after compression. For each method and
 landmark count, we choose the most compressed result satisfying the same
 relative-$\ell_2$ error limit on every velocity field. The zero band denotes analytic
 divergence freedom of the kernel fields, not a measured PHS error.
 Direct points use the normalized local-PHS estimate of
 $\mathcal D_h(\widehat u-u)$ in \eqref{eq:divergence-measure}. Both coordinates
 are averaged over the five fields, with compression ratio defined in
 \eqref{eq:aggregate-rate-error}. We show all two-level and hybrid landmark
 counts that meet an error limit.}
 \label{fig:divergence-taylor}
\end{figure}

\paragraph{Species transport}
Direct nodal compression requires less storage at relative velocity error
$10^{-3}$, while global Mat\'ern is the most economical representation
that guarantees zero divergence (\cref{fig:divergence-species}). Direct
compression introduces a nonzero divergence change even at this small
velocity error.

\begin{figure}[tbp]
 \centering \includegraphics[page=7,width=0.8\linewidth]{plot_bank_corrected_20260929.pdf}
 \caption{Species-transport divergence after compression. For each method
 and landmark count, we choose the most compressed result satisfying the
 same relative-$\ell_2$ error limit on every velocity field. The zero band denotes analytic
 divergence freedom of the kernel fields, not a measured PHS error.
 Direct points use the normalized local-PHS estimate of
 $\mathcal D_h(\widehat u-u)$ in \eqref{eq:divergence-measure}. Both coordinates
 are averaged over the five fields, with compression ratio defined in
 \eqref{eq:aggregate-rate-error}. We show each hybrid landmark count that
 meets an error limit; no tested two-level \nyst{} approximation meets these limits.}
 \label{fig:divergence-species}
\end{figure}

\subsubsection{Periodicity}

Direct compression need not assign identical reconstructed velocities to
the two copies of a periodic boundary node. Whether it does so depends on
the data and compressor setting. We compare this baseline with a simple
alternative that stores each periodic node once and copies its reconstructed
value to the matching boundary. This enforces nodal equality with little
change in compression ratio. The periodic kernel representations additionally
provide a periodic velocity field through their basis functions.
\Cref{fig:taylor-periodicity} reports the RMS mismatch defined in
\eqref{eq:periodicity-measures}; \cref{fig:taylor-periodicity-max} gives the maximum mismatch.
In both plots, the direct-point mismatch drops to zero at the selected PFPL
setting, then rises when the best compression ratio is obtained with MGARD.
The zero is specific to that PFPL setting, not a general periodicity guarantee.
We select for storage subject to a velocity-error limit, so a more compressed
setting can have a larger boundary mismatch. The upward jump therefore
reflects a change of compressor, rather than a threshold at which one
compressor begins to lose periodicity.

\begin{figure}[tbp]
 \centering
 \includegraphics[width=0.8\linewidth]{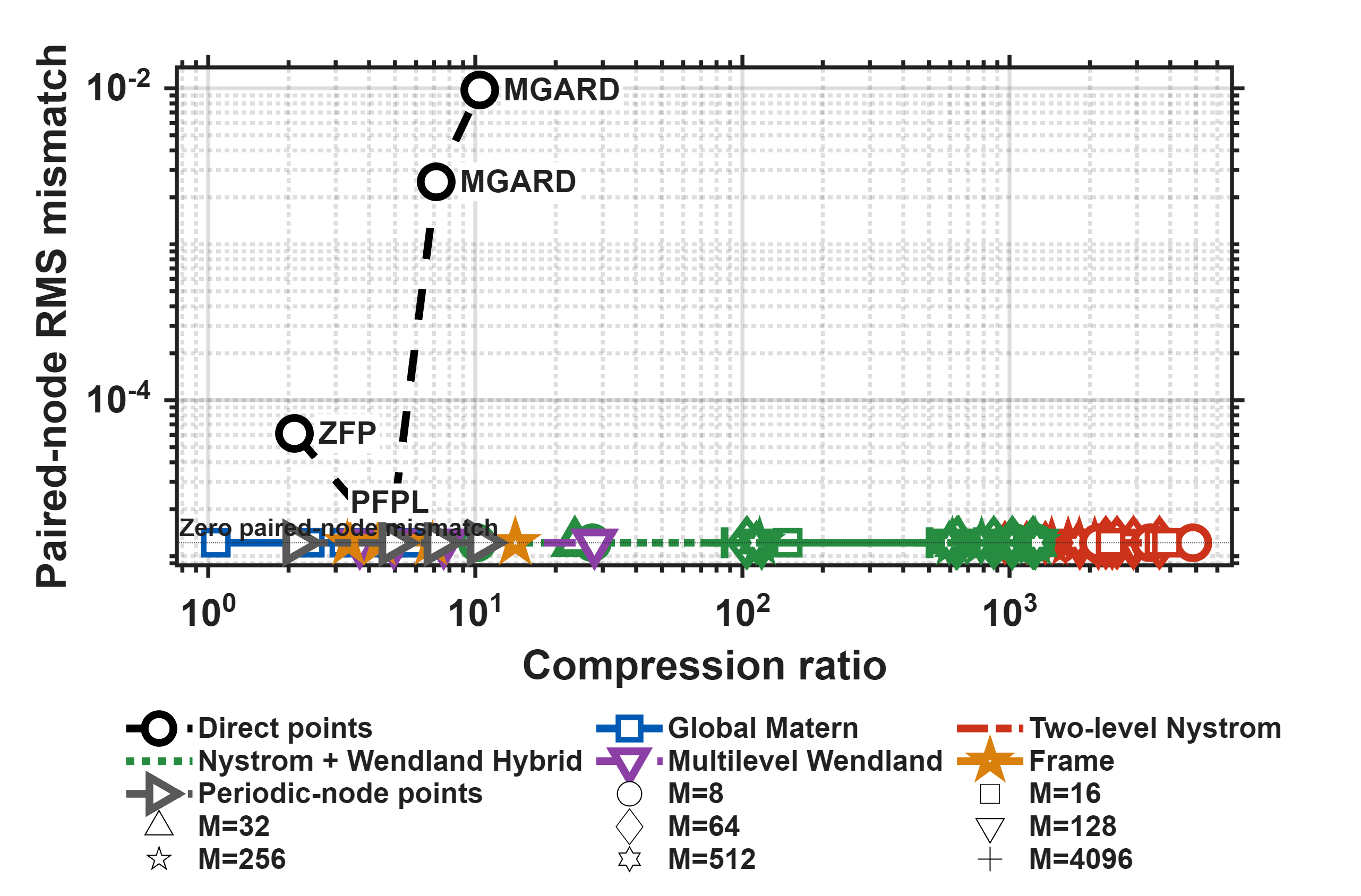}
 \caption{Taylor--Green normalized RMS jump over paired periodic boundaries, defined in \eqref{eq:periodicity-measures}, versus compression ratio from \eqref{eq:aggregate-rate-error}.
 Each point uses the most compressed result meeting the relative-$\ell_2$
 error limit on every velocity field; both
 plotted coordinates are averages over the five velocity fields. Labels identify the selected
 compressor for direct points (ZFP, PFPL, then MGARD at the two loosest
 error limits). The upward jump follows the switch from PFPL's zero boundary
 mismatch to MGARD's nonzero mismatch; selection maximizes compression
 subject to velocity accuracy, not periodicity. Exact zeros occupy the labeled
 zero-mismatch display band. Periodic-node points impose equality only on endpoint pairs;
 periodic kernels define a continuously periodic reconstructed field. We show
 all two-level and hybrid landmark counts that meet an error limit.}
 \label{fig:taylor-periodicity}
\end{figure}

\subsubsection{Velocity fluctuations across scales}
The physical-scale mixtures are designed to represent velocity variations
at several separations. We test how well their compressed reconstructions
retain the structure function of the source data. At relative velocity
error $10^{-3}$, direct compression, the frame, and
the hybrid give similarly small changes in the structure function
(\cref{fig:species-structure-function}), measured by \eqref{eq:structure-function-error}.

The uncompressed data have a mean fitted exponent of approximately $0.685$,
close to the Kolmogorov value $2/3$. At the same velocity-error limit,
the mean absolute exponent changes for direct points, global Mat\'ern,
multilevel Wendland, and the frame are of order $10^{-5}$.
These compression-induced changes are much smaller
than the variation caused by shifting or widening the fitting interval:
see \cref{sec:structure-window}. The agreement with $2/3$ is therefore specific
to the chosen interval. \Cref{fig:species-structure-slope} shows the
exponents as compression increases.

\begin{figure}[tbp]
 \centering
 \includegraphics[page=8,width=0.8\linewidth]{plot_bank_corrected_20260929.pdf}
 \caption{Species transport: structure-function error from
 \eqref{eq:structure-function-error} versus mean compression ratio $\overline R$
 from \eqref{eq:aggregate-rate-error}, averaged over the five velocity fields.
 For each method and landmark count, we select the most compressed result
 meeting the velocity-error limit on every field. We show each hybrid
 landmark count that meets an error limit.}
 \label{fig:species-structure-function}
\end{figure}

\begin{figure}[tbp]
 \centering
 \includegraphics[page=9,width=0.8\linewidth]{plot_bank_corrected_20260929.pdf}
 \caption{Species transport: fitted exponent $p$ versus compression ratio,
 both averaged over the five velocity fields. The quantities are defined
 in \eqref{eq:structure-exponent} and \eqref{eq:aggregate-rate-error}.
 We select the most compressed result meeting the velocity-error limit on
 every field and show each hybrid landmark count that meets an error limit.
 The horizontal reference marks the Kolmogorov value $2/3$.}
\label{fig:species-structure-slope}
\end{figure}

\subsection{Residual levels and separate compression of frame blocks}
\label{sec:guarantees-ablations}
\label{sec:results-guarantee}

The hybrid and frame offer further choices in how coefficients are fitted
and stored. We examine whether an additional residual level improves the
hybrid and whether ordering and compressing frame blocks separately
improves their storage.

\paragraph{An additional residual level}
One all-node Wendland correction already removes the nodal projection
error. An intermediate correction changes the distribution of that
residual among the coefficient blocks. We tested whether this improves
compression using two Wendland levels. The cavity comparison
favored one correction. For Taylor--Green
at relative error $10^{-3}$, we find the largest compression ratio meeting
the error limit separately for each velocity field and compressor. We then compare
the two-correction ratio with the one-correction ratio on the same velocity field.
Across the five velocity fields, the median reduction in compression ratio is approximately
$2$--$18\%$ for the CPU codecs and $1$--$20\%$ for the GPU codecs.
We retain one correction on this evidence. Here the settings are selected
separately for each velocity field; the main figures use one shared setting and
the mean compression ratio.

\paragraph{Frame ordering and separate block compression}
Grouping and reordering frame coefficients leave the uncompressed fit
unchanged, but can affect compression. For the Taylor--Green
frame, separating the three coefficient blocks without
reordering generally gives no benefit at the same $10^{-3}$ error limit.
Grouping nearby centers before splitting the blocks improves compression
for all four CPU codecs, but not for all GPU codecs. We group the centers
within each block in Morton order, which interleaves the bits of their
spatial coordinates. Relative to single-array compression, the median
gains are approximately $9\%$ for ZFP, $23\%$ for PFPL, $34\%$ for SZ3,
and $17\%$ for MGARD. The main CPU frame curves use this ordering.
On the GPU, PFPL gains the same $23\%$, CUDA ZFP changes little, and
cuSZ and MGARD-X lose approximately $20\%$ and $22\%$, respectively.
These comparisons include the bytes needed to store the three stream
lengths and hold the coefficient tolerances fixed.

We also compressed the three species frame levels as separate arrays, keeping
their within-level ordering and scalar tolerances unchanged. Including the
stream-length overhead, the best ratio at $10^{-2}$ changed from $3.464$
to $3.462$ on both CPU and GPU. We therefore retain single-array compression.
Error allocation among levels remains to be optimized.

\input{results_gpu_native}

\subsection{Limitations}
\label{sec:limitations}
Our comparisons include choices of kernel scales and centers as well as
fitting rules. In particular,
the multilevel Wendland and frame constructions use different supports
as well as sequential and simultaneous fits. We evaluate five velocity fields per
dataset. Full trajectories, long-time restart
stability, and downstream solver accuracy require further study. Compression
ratios exclude geometry, center indices, and kernel parameters shared on a
fixed mesh. The CPU and GPU experiments do not include a consistent set of
execution-time measurements, so they establish no speed advantage.
The property tests cover incompressibility, periodicity, and selected
second-order velocity statistics. Automatic choice of landmark counts,
center hierarchies, and kernel scales from an accuracy and storage budget
remains an open problem.

%% file: results_gpu_native.tex
\subsection{GPU compressor comparison}
\label{sec:results-gpu-native}

Our kernel fitting and evaluation run on the GPU. Keeping the fitted
coefficients in GPU memory avoids a transfer to the host for compression.
We therefore repeat the comparison with CUDA ZFP, PFPL-CUDA, cuSZ, and
MGARD-X~\cite{ZFPDocumentation,FallinEtAl2025,TianEtAl2020,GongEtAl2024},
using the same property-preserving representations. We compare
compressed sizes at matched reconstruction errors because several GPU
compressors use different error controls from their CPU counterparts.
Details of GPU evaluation and the remaining host work appear in
\cref{sec:implementation}.

PFPL gives the same rate--distortion curve on both devices in
\cref{fig:gpu-native-species}, consistent with its design to produce
identical compressed files~\cite{FallinEtAl2025}. We use its absolute-error
mode in both experiments.
The remaining CPU and GPU codecs use different implementations or
error-control modes. CUDA ZFP uses a
fixed rate, prescribing bits per value rather than an error tolerance,
and reaches at most approximately $16\times$ in the tested range.
CPU ZFP instead prescribes an absolute error tolerance.
MGARD-X satisfies its RMS bounds and achieves lower compression ratios
than CPU MGARD at comparable field errors on these one-dimensional
coefficient vectors. Cavity and Taylor--Green results appear
in \cref{sec:gpu-codec-appendix}.

For cuSZ, we check the reconstructed coefficients on the device before
accepting the compressed stream. Small arrays are stored uncompressed;
larger arrays also revert to uncompressed storage if their maximum
coefficient error exceeds $1.1$ times the requested tolerance. This acceptance
threshold is looser than the requested bound and must be used when
applying a coefficient-error estimate to accepted cuSZ results.
We did not measure fitting, compression, decompression, and evaluation
times consistently across the CPU and GPU runs, so we do not report
CPU-to-GPU speedups.

\begin{figure}[tbp]
  \centering
  \includegraphics[page=11,width=0.72\linewidth]{plot_bank_corrected_20260929.pdf}
  \caption{Device-resident compression of the \nyst{}--Wendland hybrid for
  species transport with $M=256$. Each point reports the mean compression ratio and
  largest relative $\ell_2$ error over five velocity fields, as defined in
  \eqref{eq:aggregate-rate-error}.
  Values below $10^{-6}$ are drawn at $10^{-6}$. Curves retain nondominated
  settings for each compressor; PFPL (CPU) is the reference curve.
  CUDA ZFP uses fixed rate; the other modes control error.}
  \label{fig:gpu-native-species}
\end{figure}

%% file: conclusion.tex
\section{Conclusions}
\label{sec:conclusions}
We presented several methods for property-preserving compression of incompressible and periodic flow fields. Our methods all preserved these properties analytically, were accompanied by nodal error bounds, and allowed lossy compression without relaxing incompressibility or periodicity.

Our results indicate that the choice of representation depends first on the required accuracy. In particular, we found that if an uncompressed two-level \nyst{} fit is sufficiently accurate, its
small coefficient count can yield substantial storage savings. If its fitting error exceeds the target, tighter
compression tolerances cannot help; residual interpolation or another
interpolating representation is needed. Among these representations,
compressed size depends on the flow and compressor, not just the number
of coefficients. We therefore recommend comparing a few representations (perhaps one from each broad class)
at the intended accuracy on representative velocity fields. GPU
compressors also avoid host transfers of the coefficients, but their
different compression modes require comparisons at matched field errors.

Direct nodal compression remains competitive when exact constraints are
unnecessary. Periodic-node storage enforces equality at paired boundary
nodes, whereas periodic kernels enforce it throughout the domain and
also preserve zero divergence. Neither guarantee ensures preservation of
turbulent velocity statistics: their changes must be checked over the
spatial scales of interest.

%% file: ex_supplement_body.tex
\hypersetup{
  pdftitle={Supplementary Materials: Property-Preserving Data Compression for Incompressible Flows},
  pdfauthor={Andrey Pluzhnik, Ramansh Sharma, Martin Burtscher, and Varun Shankar}
}

\begin{center}
\bfseries\MakeUppercase{Supplementary Materials: Property-Preserving Data
Compression for Incompressible Flows}
\end{center}

\vspace{0.11in}

These supplementary materials give the experimental parameters,
additional error comparisons, and results for individual compressors.
References to sections, equations, figures, and tables without an SM prefix
refer to the accompanying article.

\input{appendix_codec}

%% file: appendix_codec.tex
\FloatBarrier
\input{experiments}
\input{bound_details}
\FloatBarrier
\raggedbottom
\section{Maximum pointwise errors and comparisons by compressor}
\label{sec:codec-appendix}

The main figures combine compressors and use a relative Euclidean error.
Here we examine the largest local velocity errors and the performance of
each compressor separately. The pointwise and individual-compressor curves use
128 landmarks per Mat\'ern level for cavity and 256 for
Taylor--Green and species transport; the main figures show every tested
landmark count. The pointwise-error curves report $E_\infty$ from
\eqref{eq:maximum-pointwise-error}; the curves for individual compressors
report the largest relative error $E_2$ defined in \cref{sec:results-quantities}.
Settings that fail the error checks, dominated points, and
duplicates are omitted. Plots at prescribed error limits also omit methods
that cannot meet those limits. Line style and marker shape distinguish methods
independently of color.

The periodic-boundary comparison in \cref{fig:taylor-periodicity-max}
uses the same cross-compressor selection as the RMS comparison in
\cref{fig:taylor-periodicity}. The direct-point mismatch rises when the
selected compressor changes from PFPL, which gives matching boundary values
at that setting, to MGARD, which does not. This occurs because we maximize
compression subject to velocity accuracy, without imposing a boundary-mismatch
limit. It is not a loss-of-periodicity threshold for a single compressor.

\newcommand{\singlecodecfigure}[3]{%
\begin{figure}[!htbp]\centering
\includegraphics[page=#1,width=.88\linewidth]{plot_bank_corrected_20260929.pdf}
\caption{#2}\label{#3}
\end{figure}}

\newcommand{\compactcodecfigure}[3]{%
\begin{figure}[!htbp]\centering
\includegraphics[page=#1,width=.88\linewidth]{plot_bank_corrected_20260929.pdf}
\caption{#2}\label{#3}
\end{figure}}

\begin{figure}[!htbp]
\centering
\includegraphics[page=12,width=.88\linewidth]{plot_bank_corrected_20260929.pdf}
\caption{Lid-driven cavity with $M=128$: largest absolute pointwise velocity error over all nodes
and five velocity fields versus mean compression ratio; lines are nondominated envelopes
across the four compressors. The quantities are defined in \cref{eq:maximum-pointwise-error,eq:aggregate-rate-error}.}
\label{fig:pointwise-lid}
\end{figure}
\singlecodecfigure{13}%
{Taylor--Green with $M=256$: largest absolute pointwise velocity error over all nodes and five velocity fields versus
mean compression ratio; lines are nondominated envelopes across the four
compressors. See \cref{eq:maximum-pointwise-error,eq:aggregate-rate-error} for the definitions.}%
{fig:pointwise-taylor}
\singlecodecfigure{14}%
{Species transport with $M=256$: largest absolute pointwise velocity error over all nodes and five velocity fields
versus mean compression ratio; lines are nondominated envelopes across
the four compressors. See \cref{eq:maximum-pointwise-error,eq:aggregate-rate-error} for the definitions.}%
{fig:pointwise-species}

\singlecodecfigure{15}%
{Lid-driven cavity with ZFP: largest relative $\ell_2$ error over five velocity fields versus mean
compression ratio; lines retain nondominated settings. Both quantities are defined in \eqref{eq:aggregate-rate-error}.}{fig:codec-lid-zfp}
\singlecodecfigure{16}%
{Lid-driven cavity with PFPL: largest relative $\ell_2$ error over five velocity fields versus mean
compression ratio; lines retain nondominated settings. Both quantities are defined in \eqref{eq:aggregate-rate-error}.}{fig:codec-lid-pfpl}
\singlecodecfigure{17}%
{Lid-driven cavity with SZ3: largest relative $\ell_2$ error over five velocity fields versus mean
compression ratio; lines retain nondominated settings. Both quantities are defined in \eqref{eq:aggregate-rate-error}.}{fig:codec-lid-sz3}
\singlecodecfigure{18}%
{Lid-driven cavity with MGARD: largest relative $\ell_2$ error over five velocity fields versus
mean compression ratio; lines retain nondominated settings. Both quantities are defined in \eqref{eq:aggregate-rate-error}.}{fig:codec-lid-mgard}

\singlecodecfigure{19}%
{Taylor--Green with ZFP: largest relative $\ell_2$ error over five velocity fields versus mean
compression ratio; lines retain nondominated settings. Both quantities are defined in \eqref{eq:aggregate-rate-error}.}{fig:codec-taylor-zfp}
\singlecodecfigure{20}%
{Taylor--Green with PFPL: largest relative $\ell_2$ error over five velocity fields versus mean
compression ratio; lines retain nondominated settings. Both quantities are defined in \eqref{eq:aggregate-rate-error}.}{fig:codec-taylor-pfpl}
\singlecodecfigure{21}%
{Taylor--Green with SZ3: largest relative $\ell_2$ error over five velocity fields versus mean
compression ratio; lines retain nondominated settings. Both quantities are defined in \eqref{eq:aggregate-rate-error}.}{fig:codec-taylor-sz3}
\singlecodecfigure{22}%
{Taylor--Green with MGARD: largest relative $\ell_2$ error over five velocity fields versus mean
compression ratio; lines retain nondominated settings. Both quantities are defined in \eqref{eq:aggregate-rate-error}.}{fig:codec-taylor-mgard}
\singlecodecfigure{23}%
{Taylor--Green: maximum normalized boundary mismatch from
\eqref{eq:periodicity-measures} versus compression ratio from
\eqref{eq:aggregate-rate-error}. Both quantities are averaged
over the five velocity fields. For each method and landmark count, we choose
the most compressed result meeting the velocity-error limit on every field.
The direct-point curve switches from ZFP to PFPL and then MGARD as the
error limit increases. Its upward jump occurs when
MGARD replaces a PFPL setting with zero boundary mismatch.
All feasible landmark counts are shown; exact zeros occupy the labeled
zero-mismatch band.}%
{fig:taylor-periodicity-max}

\singlecodecfigure{24}%
{Species transport with ZFP: largest relative $\ell_2$ error over five velocity fields versus mean
compression ratio; lines retain nondominated settings. Both quantities are defined in \eqref{eq:aggregate-rate-error}.}{fig:codec-species-zfp}
\compactcodecfigure{25}%
{Species transport with PFPL: largest relative $\ell_2$ error over five velocity fields versus mean
compression ratio; lines retain nondominated settings. Both quantities are defined in \eqref{eq:aggregate-rate-error}. The direct-point curve
flattens near unit relative error as loose tolerances erase the velocity
variation. At the loosest settings, all five velocity fields have unit relative error
and compress to 84 bytes each (approximately $3254\times$). This high-ratio
tail represents loss of the signal, not accurate reconstruction. The compressor
is fixed throughout this figure.}{fig:codec-species-pfpl}
\compactcodecfigure{26}%
{Species transport with SZ3: largest relative $\ell_2$ error over five velocity fields versus mean
compression ratio; lines retain nondominated settings. Both quantities are defined in \eqref{eq:aggregate-rate-error}.}{fig:codec-species-sz3}

\begin{figure}[!htbp]
\centering
\includegraphics[page=27,width=.88\linewidth]{plot_bank_corrected_20260929.pdf}
\caption{Species transport with MGARD: largest relative $\ell_2$
error over five velocity fields versus mean compression ratio; lines retain nondominated settings. Both quantities are defined in \eqref{eq:aggregate-rate-error}.}
\label{fig:codec-species-mgard}
\end{figure}

\FloatBarrier

\clearpage
\section{Device-resident compressor curves}
\label{sec:gpu-codec-appendix}

\Cref{fig:gpu-native-lid,fig:gpu-native-taylor} extend the GPU comparison
in \cref{sec:results-gpu-native} to cavity and Taylor--Green flow.
The CPU and GPU compression modes differ as described there; the curves
compare compressed size at measured reconstruction errors.

\begin{figure}[H]
\centering
\includegraphics[page=28,width=.88\linewidth]{plot_bank_corrected_20260929.pdf}
\caption{Device-resident compressor comparison for the \nyst{}--Wendland hybrid on
lid-driven cavity flow with $M=128$: largest relative $\ell_2$ error over five velocity fields versus
mean compression ratio, as defined in \eqref{eq:aggregate-rate-error}.
Curves retain nondominated settings for each compressor, with PFPL (CPU)
as reference. CUDA ZFP uses fixed rate; the other modes control error.
Values below $10^{-6}$ are drawn at $10^{-6}$.}
\label{fig:gpu-native-lid}
\end{figure}

\begin{figure}[H]
\centering
\includegraphics[page=29,width=.88\linewidth]{plot_bank_corrected_20260929.pdf}
\caption{Device-resident compressor comparison for the periodic
\nyst{}--Wendland hybrid on Taylor--Green with $M=256$: largest relative $\ell_2$ error over
five velocity fields versus mean compression ratio, as defined in \eqref{eq:aggregate-rate-error}.
Curves retain nondominated settings for each compressor, with PFPL (CPU)
as reference. CUDA ZFP uses fixed rate; the other modes control error.
Values below $10^{-6}$ are drawn at
$10^{-6}$.}
\label{fig:gpu-native-taylor}
\end{figure}

\FloatBarrier

%% file: experiments.tex
\Needspace{8\baselineskip}
\section{Reproducibility and experimental parameters}
\label{sec:experiments}

\subsection{Datasets and required structure}

\begin{table}[t]
\caption{Benchmarks, velocity dimension $d$, active-node counts after periodic
deduplication where applicable, and properties of the kernel reconstructions.}
\label{tab:datasets}
\centering
\begin{tabular}{@{}lcrp{0.39\linewidth}@{}}
\toprule
Dataset & $d$ & Active nodes & Reconstruction properties \\
\midrule
Lid-driven cavity & 2 & 9,566 & divergence-free \\
Taylor--Green & 2 & 7,288 & divergence-free and periodic \\
Species transport & 3 & 11,388 & divergence-free; physical-length kernel mixture \\
\bottomrule
\end{tabular}
\end{table}

\Cref{tab:datasets} lists the datasets and kernel constraints.
The cavity data are velocity fields on a 9,566-node triangular mesh of
$[0,1]^2$. The flow is generated by a horizontally moving top wall, with the
remaining walls no-slip, and is sampled at the terminal time $T=5$; the
realizations vary the constant lid speed~\cite{FriedenEtAl2026}. This benchmark
contains wall layers and corner structure that are not well represented by a
small global landmark space.

The Taylor--Green dataset contains four ensembles, stored at the reported times 7, 8, 9,
and 10, on $[0,2\pi]^2$. We pool these ensembles before selecting fields for testing.
The archive has 7,447 nodes; identifying the two periodic coordinate
directions leaves 7,288 unique nodes and 159 endpoint copies. The main direct
rate--distortion baseline stores one velocity per distinct periodic node. The property experiment also
compresses all 7,447 nodal velocities independently to measure the periodicity defect of
unconstrained nodal compression.

The species data are three-component velocity fields in a static three-blade
Kenics mixer. Air and methane enter opposite halves of the inlet, the solid
surfaces are no-slip, and the turbulent incompressible simulation uses the
$k$--$\omega$ shear-stress-transport (SST) closure with Reynolds numbers
between $10^5$ and $10^6$;
velocity fields are recorded at $T=0.5$ on 11,388 nodes~\cite{FriedenEtAl2026}. The
Mat\'ern representations use four physical lengths and Wendland uses five,
with weights prescribed by \cref{eq:power-law-kernel}. In source units,
\begin{align*}
\boldsymbol\ell_M&=(0.0003186381,\,0.0015278941,\,0.0073263699,\,0.0351305084),\\
\boldsymbol\ell_W&=(0.0008883565,\,0.0032616434,\,0.0119752803,\,0.0439678161,\,0.1614299465).
\end{align*}
Each mixture includes all the listed lengths. Over separations in
$[0.0041304,0.0179020]$, the local logarithmic slope of the kernel covariance
increment ranges from $0.627$ to $0.706$ for Mat\'ern and $0.594$ to $0.739$
for Wendland. These values describe the kernel, rather than the statistics
of a particular reconstructed velocity field.

The cavity, pooled Taylor--Green, and species datasets contain 10,200,
40,800, and 10,250 velocity fields, respectively. From each dataset, we select
five fields for testing with random seed 2032. Their indices are
3145, 3880, 5269, 6244, and 9958 for cavity; 12581, 15526, 21081,
24979, and 39840 for Taylor--Green; and 3160, 3899, 5294, 6275, and 10007 for
species. Active-node maps, geometry, and all representation parameters are
fixed across codecs. To select the residual-adaptive landmarks, we use five
additional velocity fields selected with seed 271828, none used for testing:
$(60,6031,7918,9425,9618)$ for cavity,
$(241,24130,31679,37706,38486)$ for Taylor--Green, and
$(60,6061,7957,9471,9665)$ for species. We selected landmark counts, support factors, and the condition target using pilot
experiments on the same dataset families, then fixed them for the final
comparisons. Thus the evaluation measures performance on a fixed test set;
it does not independently validate parameter selection. Ties in landmark
selection are resolved by the smallest node index, and base landmarks are
excluded from residual selection.

\subsection{Representation parameters}
\label{sec:representation-parameters}

We select the first landmarks by farthest-point sampling: each new
center is the node farthest from those already selected. For the second
set, we evaluate the residual in \eqref{eq:nystrom-level-one} on the
training velocity fields and assign each node the score
$e_i=\sum_t\|r_{\rm first}^{(t)}(x_i)\|_2^2$.
The first residual landmark maximizes $e_i$; each subsequent landmark
maximizes $\sqrt{e_i}\,\operatorname{dist}(x_i,Z_2)$ among unused nodes
outside $Z_1$. The distance is to the nearest selected residual landmark.
This combines residual magnitude with spatial coverage. Both sets are
fixed for all test velocity fields and compressors.

For cavity and Taylor--Green, we choose the global Mat\'ern shape using
the first velocity-component diagonal block $K_{11}$ of the kernel matrix.
With $\widehat\kappa_1$ denoting the estimated $1$-norm condition number,
the target $\kappa_\star=10^{12}$ and tolerance $\tau_\kappa=0.05$ give
\begin{equation}
 \left|\log_{10}\widehat\kappa_1(K_{11})-
 \log_{10}\kappa_\star\right|\leq\tau_\kappa.
 \label{eq:global-condition-target}
\end{equation}
Each \nyst{} level uses the same target and tolerance for its landmark
Gram matrix $W_j$. We search in $\log_{10}\varepsilon$, using at most 24
bracketing steps and 32 safeguarded secant--bisection steps. The \nyst{}
inverse-norm estimate uses eight Hager--Higham iterations~\cite{Higham1988}. Global coefficient
scaling uses at most 40 power iterations from a normalized all-ones vector and
stops when successive Rayleigh quotients differ by at most
$10^{-10}\max(|\lambda|,1)$, where $\lambda$ is the current Rayleigh quotient.

Species uses fixed physical lengths.
Its global first-block condition estimate is approximately $3.69\times10^6$.
The same lengths are used at every landmark count and level.

The landmark-count comparisons use
\[
\begin{array}{ll}
\text{Cavity:}& M\in\{8,16,32,64,128,256\},\\
\text{Taylor--Green:}& M\in\{8,16,32,64,128,256,512,4096\},\\
\text{Species:}& M\in\{8,16,32,64,128,256,512,1024\}.
\end{array}
\]
Here $M$ is the number of landmarks requested for each disjoint set. Two-level
\nyst{} uses $(M_1,M_2)=(M,M)$ except at the largest Taylor--Green count:
$M=4096$ gives $(M_1,M_2)=(4096,3192)$, exhausting the 7,288 active nodes.
The hybrid varies its base
landmark count and retains a full-node correction.
The individual-compressor comparisons in \cref{sec:codec-appendix} use 128
landmarks per Mat\'ern level for cavity and 256 for the other flows.
The retained matrix ranks are set by \cref{eq:nystrom-rank-threshold}
and need not equal either $M$ or $dM$. A two-Wendland-level hybrid is included only as an
ablation in \cref{sec:guarantees-ablations}.

The cavity and Taylor--Green compactly supported kernels use $\beta=6$
and the radial functions in \cref{eq:wendland-c4,eq:periodic-wendland-profile}, respectively. For
multilevel Wendland, the nested center counts are approximately
$N/2^{2d}$, $N/2^d$, and $N$ from coarse to fine. The species center hierarchy has
178, 1,424, and 11,388 centers, with the same five-length mixture at every level. The
frame uses the same center densities in fine-to-coarse order and the
simultaneous coefficient fit of \cref{sec:kernel-frame}: its species blocks have 11,388,
1,424, and 178 centers, all using the same physical kernel. Its widest support
couples approximately $90\%$ of node pairs. We therefore use dense component-block Cholesky for Species.
For cavity, the multilevel center counts are $(598,2392,9566)$, with
coarse-to-fine radii approximately $(0.240370,0.115470,0.0666665)$.
For Taylor--Green they are $(456,1822,7288)$, with radii
$(1.523627,0.826540,0.477202)$. The frame reverses these center counts;
its fine-to-coarse radii are $(0.0666665,0.133333,0.266666)$ for cavity
and $(0.477202,0.954403,1.908806)$ for Taylor--Green. The reported radii
are rounded; the spacing rules in \cref{sec:methods} define their values.

\subsection{Compression ratio}

For each velocity field, the compression ratio is the number of bytes in the
original double-precision velocity array divided by the number of bytes
returned by the compressor. For kernel methods, compression acts on the
coefficient vector; for direct compression, it acts on the nodal velocity
array. Geometry, landmark and center indices, and kernel parameters are fixed
for a dataset and are not included in the compressed size of each field. Separate frame streams include
24 bytes for their three lengths.

\subsection{Lossy compressors and coefficient tolerances}
\label{sec:coefficient-tolerances}

Every representation is paired with ZFP, PFPL, SZ3, and MGARD. We compress
the coefficient blocks as one-dimensional arrays, without reshaping them
onto an artificial multidimensional grid or applying compressor-specific
normalization. Direct compression uses the
original nodal layout. ZFP, PFPL, and SZ3 are checked against their pointwise
error bounds, and MGARD against its grid-normalized Euclidean/RMS criterion. A
compressed field is included only after independent checks of the coefficient
error bound and the reconstructed-field error.

The absolute nodal error allowances $\tau$ are
$\{10^{-3},10^{-2},10^{-1},1,\allowbreak 10,30,100,300,\allowbreak 10^3,3\!\times\!10^3,
10^4,3\!\times\!10^4\}$ for cavity,
$\{1,10,50,150,500,1200,2100,5000,10^4,2\!\times\!10^4,
4\!\times\!10^4,8\!\times\!10^4,1.6\!\times\!10^5\}$ for Taylor--Green,
and $\{1,10,100,200,300,400,10^3,3\!\times\!10^3,10^4,3\!\times\!10^4,
10^5,3\!\times\!10^5,10^6\}$ for species. If $s_D$ is an established upper
bound on the block evaluation matrix norm, ZFP, PFPL, and SZ3 receive
the absolute scalar bound $\tau/(s_D\sqrt{n_c})$, where $n_c$ is the number of coefficients;
MGARD receives the corresponding grid-normalized Euclidean/RMS tolerance. Every
coefficient error is checked in the norm controlled by its compressor.
We also check the resulting nodal error after evaluation.

Two-level \nyst{} stores both blocks as one vector and uses
$\varepsilon=\tau/\sqrt{m_1+m_2}$, as in \eqref{eq:nystrom-budget-bound}.
The hybrid and multilevel Wendland representations use the common
tolerance $\tau/\sqrt{Jm}$ from \eqref{eq:concatenated-block-bound}, with
$J=2$ and $3$, respectively. The species frame also uses $J=3$ with one
combined array. The Taylor--Green frame uses $\tau/\sqrt{3m}$ for each of
its three separate streams. The cavity frame allocates the allowance
among blocks using \eqref{eq:codec-budget-allocation}. Global interpolation
uses the power-iteration scaling described in \cref{sec:global-matern};
its applicable bound includes $b_{\rm global}$ from
\eqref{eq:global-valid-bound}.

For each shared compressor setting, the plotted compression ratio is the arithmetic
mean of the five per-field compression ratios. Relative $\ell_2$ and maximum
pointwise errors use the worst velocity field, so every reported setting satisfies
the criterion on all five. The numerator is always the number of bytes in the
double-precision source velocity array, and the denominator is the compressed size of
the corresponding nodal array or coefficient blocks.
Main curves pool all compressor results that satisfy their error bounds and remove a point $(r,e)$ whenever
another point has compression $r'\ge r$ and error $e'\le e$, with at least one
strict inequality. Exact duplicates are removed, and the remaining Pareto
points are sorted by compression ratio. Compressor-specific curves apply the same
rule without pooling compressors.

\subsection{Device-resident compression experiment}

We repeat the five kernel representations, including the landmark-count
comparisons, with CUDA ZFP,
PFPL-CUDA, cuSZ, and MGARD-X. JAX and CuPy exchange arrays through DLPack,
and the compressor libraries accept device pointers for fields,
coefficients, reconstructions, and compressed streams. Only compressed byte
counts return to the host during compression. For species, we also compute errors
and structure statistics on the GPU, downloading only scalar and binned
summaries. The Taylor--Green Wendland, frame, and hybrid experiments likewise
compute field errors on the device. Cavity and the Mat\'ern-only Taylor--Green
experiments download reconstructed arrays for subsequent error calculations;
their coefficient arrays and compressed streams remain on the device during
compression and decompression.

CUDA ZFP uses fixed rate and CPU ZFP uses fixed accuracy. cuSZ and SZ3 have
related predictive constructions with different implementations and
bitstreams. MGARD supports both CPUs and GPUs~\cite{GongEtAl2024}; here we
compare MGARD-CPU with the CUDA backend of MGARD-X, rather than running
MGARD-X on both devices. PFPL uses the same absolute-error mode on both
devices and is designed to produce identical compressed files
\cite{FallinEtAl2025}. We check every cuSZ result against
its requested absolute bound on the device. Arrays below the implementation's size threshold are stored raw. Larger
arrays also revert to raw storage if their maximum error exceeds the
on-device acceptance threshold of $1.1$ times the requested bound.
Raw storage has a coefficient-array ratio of $1$; the reported field-based
ratio still uses the original velocity-array bytes. An accepted cuSZ
reconstruction can therefore exceed its requested coefficient tolerance.
Applying \cref{thm:codec-bound} to such a result requires the acceptance
threshold $1.1\varepsilon$, or a measured coefficient-error bound, in place
of the requested $\varepsilon$. The CPU bound-verification results in
\cref{sec:bound-verification} do not use this relaxed cuSZ threshold.

\subsection{Numerical evaluation of the reported quantities}

Field error, compression ratio, divergence change, periodicity mismatch,
the structure function, and its fitted exponent are defined in
\cref{sec:results-quantities}. \Cref{eq:results-rate-error} uses the unweighted $\ell_2$ norm of the stored vector. For every constrained kernel reconstruction, divergence
violation is identically zero because
$\operatorname{range}(D)\subset\ker(\nabla\cdot)$ for every coefficient
vector. For the unconstrained point representation, the operator
$\mathcal D_h$ in \cref{eq:divergence-measure} uses the cubic PHS $r^3$,
total-degree-four polynomials, and $2\binom{4+d}{d}$ stencil nodes (30 in two
dimensions and 70 in three).
A pivoted QR selects an initial stencil subset on which the polynomial
evaluation matrix has full column rank. Convex-hull vertices are excluded
as evaluation sites but may enter neighboring stencils. For the layered
three-dimensional species nodes, the thin coordinate is scaled to the median
in-layer spacing and the corresponding velocity component is scaled
identically, leaving divergence unchanged.
Applying $\mathcal D_h$ to a divergence-free kernel field produces stencil
error. We use this calculation to check nodal consistency and use the
analytic divergence in the tradeoff plots, displaying exact zeros at a
labeled floor.

For species transport, a seed-10 subsample of 1,500 nodes defines all pairs,
which are accumulated in 700 uniformly spaced separation bins. With
$S_2(r_k)=\langle\|u(x_i)-u(x_j)\|_2^2\rangle_{\|x_i-x_j\|\in k}$, we report
relative curve error over bins $k=10,\ldots,47$,
numbered from zero. If $r_{\min}$ and $r_{\max}$ are the smallest and largest
sampled pair separations and $\Delta r=(r_{\max}-r_{\min})/700$, the fitted
bin centers run from $r_{\min}+10.5\Delta r$ to
$r_{\min}+47.5\Delta r$. The exponent $p$ is obtained by nonlinear
least squares in $S_2(r)=Cr^p$ over this fixed interval. We keep this interval fixed when comparing source and reconstructed fields.
The sensitivity study in \cref{sec:structure-window} assesses the effect
of changing it.

\subsection{Implementation and reproducibility checks}

The formulas in \cref{sec:methods} place velocity components at each node
next to one another. The implementation groups values by component;
applying the same permutation to matrices and vectors preserves the
Euclidean norms and singular values used in the error estimates.
\label{sec:implementation}

Kernel assembly, landmark selection, dense block factorization, sparse direct
solution, residual updates, and evaluation use double precision in JAX on
CUDA GPUs. The \texttt{POCON} routine in the Linear Algebra Package
(LAPACK) on the host evaluates the condition estimate used for the
global shape search. File input and output, periodic-node identification, Karush--Kuhn--Tucker (KKT) system assembly,
CPU codec calls, and transfers required by those calls remain on the host.
The device-resident codecs eliminate bulk transfers during compression and
decompression. Species error statistics are also computed on the device;
the cavity and Mat\'ern-only Taylor--Green studies transfer reconstructed arrays
for reporting. Taylor--Green Wendland, frame, and hybrid errors are computed
on the device. The
global solver uses coupled block Cholesky with nested Schur complements; the
sparse methods use direct factorization.
For cavity and Taylor--Green, multilevel Wendland uses JAX sparse \texttt{spsolve} with tolerance
$10^{-10}$. The frame KKT solve uses tolerance $10^{-12}$ with fill-reducing reordering,
and each uncompressed frame reconstruction must have relative residual below
$10^{-9}$. For each method and landmark count, we check that every prescribed
compressor setting was evaluated exactly once for each field, with finite
errors and consistent byte counts. We also check condition
estimates, coefficient tolerances, and independently recomputed field errors.
Before compression, we evaluate each fitted representation: interpolatory
methods must recover the data to solver tolerance, and the \nyst{} projection
error is measured explicitly.
Species uses dense block solves for its broad Wendland mixture and the
minimum-norm formula \eqref{eq:frame-shared-kernel} for its frame. Frame coefficients for
cavity and Taylor--Green use deterministic Morton ordering within each block.
For species, the frame blocks are concatenated from fine to coarse, with
velocity components grouped within each block. The center order begins
with the farthest-point sequence defining the intermediate level, followed
by the remaining node indices in ascending order. Coarser levels use
prefixes of this sequence; no Morton permutation is applied.

Source code and synthetic examples for the five representations are available at
\url{https://github.com/VarShankar/property-preserving-compression}.
The repository depends on KernelPack-JAX; the paper datasets and archived
outputs are not included. The software environment for the reported experiments comprised
Python 3.12.13, NumPy 2.5.3, SciPy 1.18.1, JAX/JAXlib 0.11.2, zfpy 1.0.1, and
pysz 1.0.3. The MGARD build uses 64-bit storage for Huffman outlier indices.
Experiments ran on an AMD Ryzen 7
9800X3D host with 64 gibibytes of memory and an NVIDIA RTX 5090
(32 gibibytes, driver 610.62).

%% file: bound_details.tex
\section{Additional nodal error comparisons}
\label{sec:bound-details}

To interpret the bound comparisons, we distinguish evaluation-norm
estimation, finite-precision fitting errors, and violations of the requested
coefficient tolerance. For global Mat\'ern on cavity and Taylor--Green, the evaluation-norm bound in
\cref{eq:global-valid-bound} is computed from the absolute row sums of
the symmetric kernel matrix, assembled in blocks on the GPU.
We reconstruct the power-iteration scaling using the same initial vector,
iteration limit, and stopping tolerance. Different floating-point summation
orders can change this recomputed estimate; the original estimate was not saved.
Each test combines a velocity field, representation and landmark count, compressor, and
tolerance allocation. Repeated tests on one velocity field are statistically
dependent.
Five global-Mat\'ern cavity tests at $\tau=10^{-3}$ exceeded MGARD's
requested RMS coefficient error. We exclude these tests from the theorem comparison because they violate
its coefficient-error assumption, even though their nodal errors are below
$\tau$.
The computed factors in \eqref{eq:global-valid-bound} are
$b_{\rm global}=1.087$ for cavity and $1.277$ for Taylor--Green.
For the coefficient tolerances used, the corresponding allowances are
$b_{\rm global}\tau$. Among settings meeting their coefficient bounds,
the largest observed fractions of these allowances are $0.037$ and
$0.015$, respectively. These norm calculations use floating-point arithmetic.

For the cavity frame results, the stored total field error and uncompressed interpolation
residual give upper and lower bounds on the nodal change due to compression through
\eqref{eq:compression-error-bracket}. All resulting upper bounds remain below the allocated field
allowance. For entries lacking a saved coefficient-error measurement, we
repeat compression and decompression of the saved coefficient vectors with
the original compressor settings and within-level ordering. Every repeated
compressed byte count agrees with the original table, and every measured
coefficient error satisfies the requested bound. 
For Taylor--Green, the periodic Wendland, frame, and hybrid experiments
measure the nodal change due to coefficient compression directly. Across
settings satisfying the coefficient-error bound, the largest fractions of
the allocated nodal allowance are
$0.051$ for multilevel Wendland, $0.055$ for the Morton-ordered blockwise
frame, and $0.211$ over the hybrid landmark counts. These fractions measure the bounds' conservatism for the observed
coefficient perturbations. One frame ZFP test at $\tau=40000$ exceeds its
requested maximum coefficient error by about $10\%$, although its nodal
error remains below $0.008\tau$. We exclude that setting from the
curves and theorem comparison that require the coefficient error bound to hold.

For the cavity and Taylor--Green two-level \nyst{} bases, we separately repeat the fits and test
the orthogonality and total-error identity in \eqref{eq:nystrom-nodal-bound} at every landmark count and
velocity field. The largest Frobenius norms of $U_j^TU_j-I$ and $U_1^TU_2$ are
$5.8\times10^{-12}$ and $2.0\times10^{-13}$, respectively.
We perturb the coefficient vector in the direction
$v_k=\sin(k+0.37)$, rescaled so that
$\|\delta c\|_2/\|u_X\|_2$ equals $10^{-8}$, $10^{-4}$, or $10^{-1}$.
The resulting nodal perturbation satisfies
$\big|\|U\delta c\|_2/\|\delta c\|_2-1\big|<1.2\times10^{-9}$,
including evaluation roundoff. The two sides of the squared total-error identity differ
by less than $4.1\times10^{-15}\|u_X\|_2^2$.
Relative to the squared total error itself, the largest discrepancy is
$3.2\times10^{-7}$, attained when both the projection error and the
perturbation are small. These deterministic perturbations check the evaluation matrix identities.
\Cref{tab:nystrom-bound-by-rank} reports the separate compression experiments.

\begin{table*}[tbp]
\centering
\caption{Maximum observed fraction $\theta=\|A\delta c\|_2/\tau$
from \eqref{eq:bound-effectivity} for each
landmark count. The columns give the number of landmarks requested at each level for
two-level \nyst{}; the Taylor--Green $M=4096$ case uses 4,096 and 3,192
landmarks. The hybrid varies the Mat\'ern landmark count while retaining
the full-node Wendland correction. The column headings are landmark counts, not retained matrix ranks.
Maxima pool five fields and CPU compressor settings satisfying their coefficient
bounds. Dashes denote untested counts; one is the predicted upper bound.}
\label{tab:nystrom-bound-by-rank}
\small
\setlength{\tabcolsep}{3.2pt}
\begin{tabular}{llrrrrrrrrr}
\toprule
Method & Data & 8 & 16 & 32 & 64 & 128 & 256 & 512 & 1024 & 4096\\
\midrule
Two-level \nyst{} & Cavity
 & .680 & .635 & .621 & .615 & .606 & .589 & --- & --- & ---\\
 & Taylor--Green
 & .567 & .355 & .258 & .526 & .168 & .520 & .169 & --- & .299\\
 & Species
 & .635 & .635 & .614 & .604 & .597 & .587 & .584 & .581 & ---\\
\addlinespace
\nyst{} + Wendland & Cavity
 & .058 & .058 & .059 & .058 & .067 & .081 & --- & --- & ---\\
Hybrid & Taylor--Green
 & .077 & .081 & .039 & .055 & .077 & .101 & .158 & --- & .211\\
 & Species
 & .017 & .020 & .024 & .034 & .046 & .063 & .087 & .120 & ---\\
\bottomrule
\end{tabular}
\end{table*}

\subsection{Sensitivity of the structure-function exponent}
\label{sec:structure-window}

We compare the compression-induced exponent changes with the sensitivity
of the exponent estimate to the fitting interval. We repeat the original-field
fit on four neighboring intervals, keeping the
same point subsample, separation bins, and nonlinear least-squares objective.
We shift, shorten, and extend the original interval.
The mean exponent over the five velocity fields changes as follows:
\begin{center}
\begin{tabular}{lrr}
\toprule
Interval & Separation range & Mean exponent\\
\midrule
Shifted toward smaller separations & $[0.00227,0.01604]$ & 0.762\\
Shortened & $[0.00413,0.01418]$ & 0.739\\
Original & $[0.00413,0.01790]$ & 0.685\\
Shifted toward larger separations & $[0.00599,0.01976]$ & 0.606\\
Extended & $[0.00413,0.02162]$ & 0.507\\
\bottomrule
\end{tabular}
\end{center}
Separations use the source coordinate units. These changes are much larger
than the compression-induced shifts reported on the original interval.
The fitted exponent is therefore sensitive to the interval and cannot
identify a robust inertial-range law. The reported compression-induced
changes apply to the fixed interval used in the main comparisons.